\documentclass[10pt]{amsart}

\usepackage{amssymb}
\usepackage{amsthm}
\usepackage{amsmath}
\usepackage{mathtools}
\usepackage{soul}
\setstcolor{blue}
\usepackage[usenames]{color}

\usepackage{tikz}
\usetikzlibrary{arrows,calc}

\usepackage[foot]{amsaddr}

\usepackage{hyperref}
\usepackage{cleveref}

\theoremstyle{plain}
\newtheorem{proposition}{Proposition}[section]
\newtheorem{theorem}[proposition]{Theorem}
\newtheorem{lemma}[proposition]{Lemma}
\newtheorem{corollary}[proposition]{Corollary}
\theoremstyle{definition}
\newtheorem{example}[proposition]{Example}
\newtheorem{definition}[proposition]{Definition}

\theoremstyle{remark}
\newtheorem{remark}[proposition]{Remark}
\newtheorem{conjecture}[proposition]{Conjecture}

\DeclareMathOperator{\Aut}{\mathsf{Aut}}
\DeclareMathOperator{\Gsf}{\mathsf{G}}

\DeclareMathOperator{\GL}{\mathsf{GL}}
\DeclareMathOperator{\PU}{\mathsf{PU}}
\DeclareMathOperator{\Usf}{\mathsf{U}}
\DeclareMathOperator{\Ksf}{\mathsf{K}}
\DeclareMathOperator{\Asf}{\mathsf{A}}
\DeclareMathOperator{\Msf}{\mathsf{M}}
\DeclareMathOperator{\Id}{Id}

\DeclareMathOperator{\Cc}{\mathcal{C}}

\DeclareMathOperator{\Oc}{\mathcal{O}}
\DeclareMathOperator{\Pc}{\mathcal{P}}

\DeclareMathOperator{\Uc}{\mathcal{U}}

\DeclareMathOperator{\Bb}{\mathbb{B}}
\DeclareMathOperator{\Cb}{\mathbb{C}}
\DeclareMathOperator{\Db}{\mathbb{D}}

\DeclareMathOperator{\Hb}{\mathbb{H}}

\DeclareMathOperator{\Nb}{\mathbb{N}}

\DeclareMathOperator{\Rb}{\mathbb{R}}
\DeclareMathOperator{\Sb}{\mathbb{S}}

\DeclareMathOperator{\Zb}{\mathbb{Z}}

\newcommand{\abs}[1]{\left|#1\right|}

\newcommand{\norm}[1]{\left\|#1\right\|}

\newcommand{\vect}[1]{\mathbf{#1}}

\begin{document}

\title[Rigidity of proper holomorphic maps]{Rigidity of proper holomorphic maps between balls with H\"older boundary regularity}

\author[Huang]{Kyle Huang}
\email{kkhuang@wisc.edu}
\author[Park]{Jinwoo Park}
\email{park684@wisc.edu}
\author[Skenderi]{Aleksander Skenderi}
\email{askenderi@wisc.edu}
\author[Srimurthy]{Jaan Amla Srimurthy}
\email{srimurthy@wisc.edu}
\author[Wen]{Rou Wen}
\email{rwen5@wisc.edu}
\author[Zimmer]{Andrew Zimmer}\address{University of Wisconsin-Madison}
\email{amzimmer2@wisc.edu}
\date{\today}
\keywords{}
\subjclass[2010]{}

\begin{abstract} In this paper, we prove a rigidity result for proper holomorphic maps between unit balls that have many symmetries and which extend to H\"older continuous maps on the boundary, with H\"older exponent strictly greater than $1/2$.
\end{abstract}

\maketitle

\section{Introduction}

In this paper we study rigidity results for proper holomorphic maps between Euclidean balls in complex Euclidean space. Classically rigidity results have two forms (i) proper holomorphic maps that are sufficiently smooth at the boundary must be rational, see for instance~\cite{MR1703603,MR969413}, or (ii) when the codimension is small, rational proper holomorphic maps can be characterized, see for instance~\cite{MR857373,MR1872546,MR3157993}. In this paper, we consider a different source of rigidity: the symmetries of the map. 

Given a domain $\Omega \subset \Cb^d$, let $\Aut(\Omega)$ denote the group of biholomorphic maps $\Omega \rightarrow \Omega$. Cartan proved that when $\Omega$ is bounded the group $\Aut(\Omega)$ has a Lie group structure where the map
$$
(\varphi, z) \in \Aut(\Omega) \times \Omega \mapsto \varphi(z) \in \Omega
$$
is smooth. Given two domains $\Omega_1 \subset \Cb^m$ and $\Omega_2 \subset \Cb^M$, the group $\Aut(\Omega_1)\times \Aut(\Omega_2)$ acts on the set of holomorphic maps $\Omega_1 \rightarrow \Omega_2$ by 
$$
(\phi,\varphi) \cdot f  := \varphi \circ f \circ \phi^{-1}.
$$
Then, given a holomorphic map $f : \Omega_1 \rightarrow \Omega_2$, the \emph{symmetry group of $f$}, denoted $\Gsf_f$, is the stabilizer of $f$ under this action, i.e. 
$$
\Gsf_f : = \left\{ (\phi,\varphi) \in \Aut(\Omega_1)\times \Aut(\Omega_2) : f = \varphi \circ f \circ \phi^{-1}\right\}. 
$$

\begin{example}\label{ex:trivial_example} Given $M \geq m \geq 1$, consider the trivial proper holomorphic map $f : \Bb^m \rightarrow \Bb^M$ between Euclidean balls $\Bb^m \subset \Cb^m$, $\Bb^M \subset \Cb^M$ given by 
$$
f(z): = (z,0).
$$
Then the projection $\Gsf_f \rightarrow \Aut(\Bb^m)$ is onto and so, in this case, the group $\Gsf_f$ is very large.
\end{example}

We consider the following imprecise conjecture.

\begin{conjecture} If $f : \Bb^m \rightarrow \Bb^M$ is a proper holomorphic map, $\Gsf_f$ is ``sufficiently large,'' and $f$ extends to a ``sufficiently regular'' map $\overline{\Bb^m} \rightarrow \overline{\Bb^M}$, then up to composition with automorphisms $f$ is the map in Example~\ref{ex:trivial_example}.
\end{conjecture}

Previously:
\begin{enumerate}
\item D'Angelo--Xiao~\cite[Corollary 3.2]{DX2017}  proved that the conjecture is true when $\Gsf_f$ is non-compact and $f$ is rational (note that a rational proper map extends to a $\Cc^\infty$-smooth map $\overline{\Bb^m} \rightarrow \overline{\Bb^M}$~\cite{MR1047119}). 
\item Gevorgyan--Wang--Zimmer~\cite{GWZ2004} proved that it suffices to assume that  $\Gsf_f$ is non-compact and  $f$ extends to a $\Cc^2$-smooth map $\overline{\Bb^m} \rightarrow \overline{\Bb^M}$. 
\end{enumerate} 

In this paper we lower the required boundary regularity to $\alpha$-H\"older for some $\alpha > 1/2$, but increase the symmetry assumption.

\begin{theorem}\label{thm:main} If $2 \leq m < M$ and $f: \Bb^m \rightarrow \Bb^M$ is a proper holomorphic map where 
\begin{enumerate} 
\item the image of the projection $\Gsf_f \rightarrow \Aut(\Bb^m)$ is Zariski dense,
\item $f$ extends to an $\alpha$-H\"older continuous map $\overline{\Bb^m} \rightarrow \overline{\Bb^M}$ for some $\alpha > \frac{1}{2}$, 
\end{enumerate} 
then there exist $\phi_1 \in \Aut(\Bb^m)$ and $\phi_2 \in \Aut(\Bb^M)$ such that  
\begin{align*}
\phi_2 \circ f \circ \phi_1(z) = (z,0)
\end{align*}
for all $z \in \Bb^m$. 
\end{theorem}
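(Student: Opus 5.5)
Our strategy is to use the symmetries to force $f$ to be equivariant with respect to a homomorphism $\rho : \Gamma \rightarrow \Aut(\Bb^M)$, where $\Gamma$ denotes the image of $\Gsf_f \rightarrow \Aut(\Bb^m)$. We then exploit the H\"older regularity at the boundary, via a dynamical argument near attracting fixed points, to show that $\rho$ must be compatible with a totally geodesic embedding. Concretely, we first check that the kernel of the projection $\Gsf_f \rightarrow \Aut(\Bb^m)$ acts trivially on $f(\Bb^m)$, so the second coordinate is determined on the image of $f$ modulo the pointwise stabilizer of $f(\Bb^m)$. Replacing $\Gsf_f$ by a suitable Zariski closure, we would like to obtain a continuous homomorphism $\rho$ on the real algebraic group $\PU(m,1) = \Aut(\Bb^m)$ with $f \circ \phi = \rho(\phi)\circ f$. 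Zariski density plus properness (the proper map's boundary values intertwine the actions on spheres) should let us extend the equivariance from $\Gamma$ to all of $\PU(m,1)$. We plan to use the closedness of $\Gsf_f$ (it is a closed subgroup of a Lie group, because the action is continuous) together with the Borel density and algebraicity of the relation $f\circ \phi = \varphi \circ f$, which holds on the Zariski closure. If this extension fails directly, we would work only with $\Gamma$, which contains loxodromic elements by Zariski density.

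The core step uses a loxodromic $\phi \in \Gamma$ with attracting fixed point $\xi^+ \in \partial \Bb^m$ and translation length $\lambda>0$. Let $\varphi = \rho(\phi)$. Iterating near $\xi^+$, $\phi^{-n}$ expands complex-tangential directions at rate $e^{n\lambda/2}$ and the complex-normal direction at rate $e^{n\lambda}$. Since $f|_{\partial\Bb^m}$ is $\alpha$-H\"older, we compare $|f(\phi^n x)-f(\phi^n y)|=|\varphi^n f(x)-\varphi^n f(y)|$ against $|\phi^n x-\phi^n y|^\alpha$. If $\varphi$ had translation length $\mu$, the contraction rate of $\varphi$ at its attracting point is at most $e^{-\mu/2}$ in the slowest direction. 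The H\"older bound then forces $e^{-n\lambda\alpha}\gtrsim e^{-n\mu/2}$ along generic directions, i.e.\ $\mu \ge 2\alpha\lambda>\lambda$ is impossible unless the relevant image points lie in the fast-contracting normal directions. Combined with the reverse inequality $\mu\le\lambda$ coming from properness and the Schwarz lemma for the Kobayashi metric (holomorphic maps are $1$-Lipschitz), we get $\mu=\lambda$, and $f(\partial \Bb^m)$ near $\xi^+$ must lie in the slow manifold of $\varphi$. This is a complex line picture, so $f$ maps complex geodesics of $\Bb^m$ through $\xi^\pm$ isometrically onto complex geodesics of $\Bb^M$.

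Finally we upgrade to rigidity. Since $\Gamma$ is Zariski dense, the loxodromic fixed pairs $(\xi^+,\xi^-)$ are dense in $\partial\Bb^m\times\partial\Bb^m$, so $f$ is a Kobayashi isometry on a dense family of complex geodesics, hence on all of them by continuity. A holomorphic map between balls that is isometric on complex geodesics is a complex-totally geodesic embedding, and any such embedding is $(z,0)$ up to $\phi_1,\phi_2$. This last step uses Schwarz-lemma equality cases: a holomorphic map preserving the Kobayashi distance of two points is affine-linear on the corresponding slice after normalizing by automorphisms, and the equality propagates. We expect the main obstacle to be the quantitative step comparing expansion rates. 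The threshold $\alpha>1/2$ must enter exactly as the ratio between the tangential and normal expansion exponents, and one must carefully choose $x,y$ on the sphere approaching along complex-tangential versus transverse directions so that H\"older continuity yields a contradiction when $f$ is not totally geodesic.
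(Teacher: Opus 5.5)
The decisive gap is in your last paragraph. Zariski density of $\Gamma$ does \emph{not} make loxodromic fixed-point pairs dense in $\partial\Bb^m\times\partial\Bb^m$. They are dense only in $\Lambda(\Gamma)\times\Lambda(\Gamma)$ (Proposition~\ref{prop:density of loxs}), and $\Lambda(\Gamma)$ can be a Cantor set: a Schottky group generated by large powers of two loxodromic elements in general position is discrete and free, and can be chosen Zariski dense. For such $\Gamma$, the complex geodesics on which your dynamical step gives information form a thin family, and continuity says nothing about the others.

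Nor does ``isometric on many complex geodesics'' propagate by any soft principle. The Whitney map $(z_1,z_2)\mapsto(z_1,z_1z_2,z_2^2)$ sends each slice $\{z_2=c\}\cap\Bb^2$ biholomorphically onto the complex geodesic $\{(u,cu,c^2)\}\cap\Bb^3$, yet it is not totally geodesic.

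What is missing is a mechanism that combines the holomorphy of $f$ with Zariski density to push information off the limit set. The paper does this in two steps. First, for fixed $x\in\partial\Bb^m$, the set $Z_x$ of those $y$ for which $f(L_{xy}\cap\overline{\Bb^m})$ lies in a complex line is a real-analytic subset of $\partial\Bb^m\smallsetminus\{x\}$ (Theorem~\ref{thm: Z is a real analaytic variety}). One tests collinearity at the points $\frac{2n+1}{4n}y+\frac{2n-1}{4n}x$, applies the identity theorem on the disc $L_{xy}\cap\Bb^m$, and then uses local Noetherianity. Second, a real-analytic set containing $\Lambda(\Gamma)\smallsetminus\{x\}$ must be all of $\partial\Bb^m\smallsetminus\{x\}$ (Theorem~\ref{thm:analytic envelopes}). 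This is proved by rescaling a local defining function with powers of a loxodromic element, which acts as $(v,w)\mapsto(e^{-2t}v,e^{-t}Uw)$ in the coordinates on $\partial\Pc^m$. The rescaling extracts a nonzero weighted-homogeneous polynomial vanishing on $\Lambda(\Gamma)$, contradicting Zariski density. Together with the symmetry $y\in Z_x\iff x\in Z_y$, this shows every complex line is mapped into a complex line. Rationality then follows from the Fundamental Theorem of Affine Geometry, and D'Angelo--Xiao gives the conclusion. Without such an ingredient, your argument covers at most the case $\Lambda(\Gamma)=\partial\Bb^m$.

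Two other steps need repair.

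\emph{The Zariski-closure extension.} Extending the equivariance to all of $\PU(m,1)$ through a Zariski closure is unjustified. The relation $f\circ\phi=\varphi\circ f$ is an algebraic condition on $\phi$ only if $f$ is already known to be rational. Also, $\Gamma$ may be discrete, so closedness of $\Gsf_f$ adds nothing. Full equivariance would essentially be the theorem itself; you should simply work with $\Gamma$.

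\emph{The core dynamical step.} This is the right idea: it is Theorem~\ref{thm:loxo lines are mapped to loxo lines}, and $\alpha>1/2$ enters exactly through the ratio $2$ of the contraction exponents. But the conclusion is garbled.
\begin{enumerate}
\item You must first show $\varphi$ is loxodromic. The paper uses $1-\norm{f(z)}\le C(1-\norm{z})^\alpha$ to get $\lambda_1(\varphi)\ge\lambda_1(\phi)^\alpha>1$.
\item ``$\mu=\lambda$'' does not follow from the inequalities you state.
\item The sentence about $f(\partial\Bb^m)$ lying in the slow manifold of $\varphi$ contradicts the sentence before it.
\end{enumerate}
What the comparison actually yields is this: points of $L_\phi\cap\overline{\Bb^m}$ contract at the fast rate, so they must go to points contracting at the fast rate, i.e.\ $f(L_\phi\cap\overline{\Bb^m})\subset L_\varphi$. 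That $f$ is then isometric on this disc is true, since a finite Blaschke product intertwining two hyperbolic M\"obius maps has degree one. However, it needs an argument, and the paper's route through affine lines does not require it.
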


The group $\Aut(\Bb^m)$ is a simple Lie group and hence has the structure of a real algebraic group, so there is a well-defined notion of Zariski dense subgroups. Using the explicit structure of $\Aut(\Bb^m)$ we provide an elementary definition of Zariski dense subgroups in Section ~\ref{sec:Zariski-Density} below. 

We also note that Zariski density is generic in the following sense: If $\mu$ is a probability measure on $\Aut(\Bb^m)$ induced by a smooth volume form (recall, $\Aut(\Bb^m)$ is a Lie group), then for $\mu\otimes \mu$-a.e. $(g,h) \in \Aut(\Bb^m) \times \Aut(\Bb^m)$ the group generated by $g,h$ is Zariski dense (see~\cite[Lemma 6.8]{2008arXiv0804.1395B}). 

Theorem~\ref{thm:main} is related to a well-known conjecture involving complex hyperbolic $m$-space, denoted $\Hb^m_{\Cb}$ (see for instance~\cite[Problem 3.2]{Huang_survey}). 

\begin{conjecture}\label{conj:famous conj} If $2 \leq m \leq M$ and $\rho : \Gamma \rightarrow \mathsf{Isom}(\Hb_{\Cb}^M)$ is a convex co-compact representation of a uniform lattice $\Gamma < \mathsf{Isom}(\Hb^m_{\Cb})$, then the image of $\rho$ preserves a totally geodesic copy of $\Hb^m_{\Cb}$ in $\Hb_{\Cb}^M$.
\end{conjecture}

Using the work of Cao--Mok~\cite{CaoMok1990}, Yue~\cite{Yue1996} proved this conjecture in the special case when $M \leq 2m-1$.

Complex hyperbolic $m$-space is biholomorphic to the unit ball $\Bb^m$ and under this identification $\Aut(\Bb^m)$ coincides with $\mathsf{Isom}_0(\Hb^m_{\Cb})$, the connected component of the identity in $\mathsf{Isom}(\Hb^m_{\Cb})$. 

Now suppose that $\rho: \Gamma \rightarrow \mathsf{Isom}(\Hb_{\Cb}^M)$ is as in Conjecture~\ref{conj:famous conj}. Since $\Aut(\Bb^m) = \mathsf{Isom}_0(\Hb^m_{\Cb})$ has index two in $\mathsf{Isom}(\Hb^m_{\Cb})$, we can replace $\Gamma$ by an index two subgroup and assume that $\Gamma < \Aut(\Bb^m)$. The theory of harmonic maps implies the existence of a $\rho$-equivariant proper holomorphic map $f: \Bb^m \rightarrow \Bb^M$ (see~\cite[pg. 348]{Yue1996}). One can show that $\rho$ preserves a totally geodesic copy of $\Hb^m_{\Cb}$ in $\Hb_{\Cb}^M$ if and only if $f$ satisfies the conclusion of Theorem~\ref{thm:main}. Further, since $f$ is $\rho$-equivariant, 
$$
 \{ (\gamma, \rho(\gamma)) : \gamma \in \Gamma\} < \Gsf_f
$$
and hence the image of the projection $\Gsf_f \rightarrow \Aut(\Bb^m)$ contains $\Gamma$. Finally, since the orbit map of any convex co-compact representation is a quasi-isometry, one can show that the map $f$ extends to a H\"older continuous map $\overline{\Bb^m} \rightarrow \overline{\Bb^M}$.

Hence, Conjecture~\ref{conj:famous conj} can be restated as follows:

\begin{conjecture} Suppose $f: \Bb^m \rightarrow \Bb^M$ is a proper holomorphic map which extends to H\"older continuous map $\overline{\Bb^m} \rightarrow \overline{\Bb^M}$. If the image of the natural projection $\mathsf{G}_f \rightarrow \Aut(\Bb^m)$ contains a uniform lattice, then there exist $\phi_1 \in \Aut(\Bb^m)$ and $\phi_2 \in \Aut(\Bb^M)$ such that 
$$
\phi_2 \circ f \circ \phi_1(z) = (z,0)
$$
for all $z \in \Bb^m$. 
\end{conjecture}

A lattice is always Zariski dense (by the Borel density theorem) and so Theorem~\ref{thm:main} provides a proof of the conjecture when the H\"older constant is sufficiently large. 

\subsection{Outline of the proof}\label{sec:outline of proof} The key tool in the proof of Theorem~\ref{thm:main} is the Fundamental Theorem of Affine Geometry. 

Recall that a fractional linear transformation is a map $f : \Uc \rightarrow \Cb^m$ of the form 
$$
f(z) = \frac{Az+b}{c^T z + d}
$$
where  $A$ is an $m$-by-$m$ matrix; $c,b \in \Cb^m$; $d \in \Cb$; and 
$$
\Uc: =\{ z\in \Cb^m :c^T z + d \neq 0\}.
$$

Given such a map, if $L \subset \Cb^m$ is a complex affine line (i.e. $L = a + \Cb \cdot b$ for some $a,b \in \Cb^m$ with $b \neq 0$), then it is straightforward to show that $f(L \cap \Uc)$ is contained in a complex affine line. The Fundamental Theorem of Affine Geometry states that fractional linear transformations are the only maps with this property. 

\begin{theorem}[{The Fundamental Theorem of Affine Geometry, see e.g.~\cite[Theorem 3.5]{McCallum}}]\label{thm:FTAG} Suppose $\Uc \subset \Cb^m$ is open and connected. Assume $f : \Uc \rightarrow \Cb^m$ is a map with the following properties: 
\begin{enumerate} 
\item The image $f(\Uc)$ contains $(m+1)$-points in general affine position (i.e. they are not contained in a  $(m-1)$-dimensional affine plane).  
\item For every complex affine line $L \subset \Cb^m$, the image $f(L \cap \Uc)$ is contained in a complex affine line. 
\end{enumerate}
Then there exists 
\begin{align*}
g = \begin{pmatrix} A & {b} \\ c^T & d \end{pmatrix}  \in \mathsf{GL}(m+1,\Cb)
\end{align*}
such that either 
\begin{itemize}
\item $f(z) =  \frac{Az+b}{c^Tz+d}$ and $\{ z : c^Tz + d=0\} \cap \Uc = \emptyset$, or 
\item $f(z) = \frac{A\bar z+b}{c^T\bar z+d}$ and $\{ z : c^T\bar z + d=0\} \cap \Uc = \emptyset$.
\end{itemize}
\end{theorem}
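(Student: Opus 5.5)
The plan is to reduce to the classical Fundamental Theorem of Projective Geometry over $\Cb$ and then read off the local form. Embed $\Cb^m \hookrightarrow \Cb\Pb^m$ via $z \mapsto [z:1]$.

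The first step is to upgrade hypothesis (2) on $\Uc$ to an everywhere statement. I want to show that $f$ maps "enough" collinear triples to collinear triples, so that $f$ extends to a collineation-like map. The key point is that the structure of $\Cb$-lines meeting a small open ball already determines the geometry. Concretely, fix a small ball $B \subset \Uc$. I would use (1) together with connectedness of $\Uc$ to show that $f(B)$ itself contains $m+1$ points in general position. Suppose instead that $f(B)$ lay in a hyperplane $H$. Lines through points of $B$ and points elsewhere in $\Uc$ have images in lines meeting $H$ in at least two points, hence contained in $H$. Propagating along chains of balls then gives $f(\Uc) \subset H$, a contradiction.

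The second step is the core argument. On the ball $B$, the restriction of $f$ preserves collinearity and is non-degenerate. I would prove that $f|_B$ sends the incidence structure of lines in $B$ into $\Cb\Pb^m$ compatibly with how lines are built. From four points in general position in $B$ and their images, compose with a projective map $P$ so that $P\circ f$ fixes a projective frame $e_0,\dots,e_m,e_0+\dots+e_m$ (chosen inside $B$).

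The third step is to recover the field automorphism. Restrict to a line $\ell$ through two frame points. Inside $B$, the von Staudt constructions of sums and products of coordinates can be performed locally. Here I take points near a base point, and all auxiliary intersection points stay in $B$ for small inputs. Collinearity preservation then shows that, on the coordinate of $\ell$, the map $\sigma$ satisfies $\sigma(a+b)=\sigma(a)+\sigma(b)$ and $\sigma(ab)=\sigma(a)\sigma(b)$ for $a,b$ in a small neighborhood. A local additive and multiplicative map extends uniquely to a ring endomorphism of $\Cb$: extend additively by scaling with rationals, then check multiplicativity.

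The fourth step pins down $\sigma$. I would then show that $\sigma$ is identity or conjugation, and this step needs care. A ring endomorphism of $\Cb$ that is not continuous can exist abstractly, but $\sigma$ maps a neighborhood into a bounded set, since it takes values in $f(B)$ after translation. Shrink $B$ so that $f(B)$ is bounded, or use the local boundedness that comes from the construction. A bounded-on-an-open-set additive map is $\Rb$-linear, and an $\Rb$-linear field endomorphism of $\Cb$ is either $z\mapsto z$ or $z\mapsto \bar z$.

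I expect the main obstacle here. Hypothesis (2) gives no continuity, and $f(B)$ need not be bounded if the denominator vanishes near $B$. So I would first choose $B$ so that the image avoids the image of the hyperplane at infinity. Failing that, I would instead argue with a projective chart adapted to $f(B)$.

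The fifth step shows the coordinate map is $\sigma$ on all coordinates. With $\sigma$ in hand, the standard coordinate argument gives $P\circ f(z) = \sigma(z)$ coordinatewise on $B$. Use the projections to the coordinate axes through the frame and intersections of lines, all carried out locally. Hence $f = P^{-1}\circ\sigma$ on $B$, i.e. $f$ is of the form $\frac{Az+b}{c^Tz+d}$ or $\frac{A\bar z+b}{c^T\bar z+d}$ on $B$.

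The last step propagates this to all of $\Uc$ and rules out poles. Take any $z\in\Uc$ and two distinct lines through $z$ each meeting $B$ in an open set. Then $f(z)$ lies on both image lines, which are determined by the formula on $B$. Their intersection is the formula's value, provided this value is finite. If $c^Tz+d=0$ for some $z\in\Uc$, the two image lines would be parallel and could not meet in $\Cb^m$. That contradicts $f(z)\in\Cb^m$, so $\{c^Tz+d=0\}\cap\Uc=\emptyset$. Finally, $g\in\GL(m+1,\Cb)$ follows from the non-degeneracy shown in the first step.
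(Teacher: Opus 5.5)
The paper does not prove this statement; it only cites McCallum. So your outline has to stand on its own, and it breaks at exactly the two places where the statement, read literally, is false.

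\emph{Step 1.} The propagation needs $f(L\cap B)$ to contain two distinct points, and nothing in (1)--(2) prevents $f$ from collapsing $B$. Take an affine hyperplane $H_0$ meeting $\Uc$ and a point $p\notin H_0$, and let $f(z)=z$ on $H_0\cap\Uc$ and $f(z)=p$ elsewhere. A line inside $H_0$ is mapped into itself. Any other line meets $H_0$ in at most one point $q$, so it is mapped into $\{q,p\}$. The image $(H_0\cap\Uc)\cup\{p\}$ contains $m+1$ affinely independent points. So (1) and (2) hold, yet $f(B)=\{p\}$ for every ball $B$ missing $H_0$. For balls meeting $H_0$, no $m+2$ points of $f(B)$ are in general position, so the projective frame of Step 2 does not exist. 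Note that (1) only ever gives you $m+1$ points, an affine frame, not $m+2$.

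\emph{Step 4.} There is no source of local boundedness, since (2) carries no continuity, and neither choosing $B$ nor changing projective charts can create it. Let $\sigma$ be a discontinuous field automorphism of $\Cb$, and on $\Uc=\Cb^m$ set $f(z)=(\sigma(z_1),\dots,\sigma(z_m))$. This bijection maps each line $a+\Cb\cdot b$ onto the line $\sigma(a)+\Cb\cdot\sigma(b)$, so (1) and (2) hold, and the image never meets the hyperplane at infinity. But $f$ is unbounded on every open set (an additive map bounded on an open set is $\Rb$-linear), and it is of neither form in the conclusion.

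Also, for $m=1$ hypothesis (2) is vacuous, so $m\geq 2$ is needed; your von Staudt constructions use this silently.

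Hence no argument from (1) and (2) alone can succeed; an extra regularity hypothesis is required. The paper does have one where it applies the theorem, since $\pi\circ f$ is holomorphic, and under holomorphy your outline can be completed:
\begin{enumerate}
\item If $f(B)$ lay in a hyperplane $\{\ell=c\}$, then $\ell\circ f-c$ would vanish on $B$, hence on $\Uc$ by the identity theorem, contradicting (1). So for every hyperplane $H$, the set $f^{-1}(H)\cap B$ is a proper analytic subset of $B$. Avoiding finitely many of these, you find a projective frame in $f(B)$.
\item Continuity makes the normalized map bounded near a frame point. So the additive extension of $\sigma$ is bounded on an open set, hence $\Rb$-linear, hence $z\mapsto z$ or $z\mapsto\bar z$, and holomorphy excludes the second.
\item Once $f|_B$ is an injective fractional linear map, your local von Staudt step goes through. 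So does your final propagation and pole-exclusion argument: take two lines through $z$ meeting $B$, whose image lines are parallel when $c^Tz+d=0$.
\end{enumerate}
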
 

We will show that under the hypotheses of Theorem~\ref{thm:main}, that the proper holomorphic map sends affine lines into affine lines and thus by the  Fundamental Theorem of Affine Geometry $f$ is rational. Once rationality is established we can appeal to the earlier work D'Angelo--Xiao~\cite[Corollary 3.2]{DX2017} to complete the argument. 

Recall that an element of $\Aut(\Bb^m)$ is either elliptic, parabolic, or loxodromic (see Section~\ref{sec:background automorphism group}). A loxodromic element $g$ has exactly two fixed points in $\partial \Bb^m$ and these fixed points can be labeled $x^\pm_g$ such that 
$$
\lim_{n \rightarrow \pm \infty} g^n(z) = x^\pm_g
$$
for all $z \in \overline{\Bb^m} \smallsetminus \{x^\mp_g\}$. In fact, one can compute the rate of convergence (see Theorem~\ref{thm: contraction rates of loxodromics} below). Let $L_g$ denote the complex affine line containing $x^\pm_g$. Then there exists $\lambda > 0$ (which depends on $g$) such that: if $z \in \overline{\Bb^m} \smallsetminus  \{x_{g}^{+}, x_{g}^{-}\}$, then 
$$
\lim_{n \rightarrow \infty} \frac{1}{n} \log \|g^n(z) - x^+_g\| = 
\begin{cases} 
-2\log \lambda  & \text{if } z \in L_g \\
-\log \lambda & \text{if } z \notin L_g 
\end{cases}. $$

Using this multiplicative gap in exponential contraction rates, we will prove the following. 

\begin{theorem}[see Theorem~\ref{thm:loxo lines are mapped to loxo lines} below]\label{thm:loxo lines are mapped to loxo lines - intro} Suppose $f : \Bb^m\rightarrow \Bb^M$ is a proper holomorphic map that extends to an $\alpha$-H\"older map $\overline{\Bb^m} \rightarrow \overline{\Bb^M}$ for some $\alpha > 1/2$.  If $(\phi,\psi) \in \Gsf_f$ and $\phi$ is loxodromic, then $\psi$ is loxodromic. Moreover, 
$$
f\left(L_{\phi} \cap \overline{\Bb^m} \right) \subset L_{\psi} \cap \overline{\Bb^M}.
$$
\end{theorem}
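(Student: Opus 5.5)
We work throughout with the equivariance $f\circ\phi^n = \psi^n\circ f$ on $\overline{\Bb^m}$, which holds by continuity of the extension. Let $\lambda>1$ be the translation parameter of $\phi$, so that points of $L_\phi$ are contracted toward $x^+_\phi$ at rate $\lambda^{-2n}$ and all other points at rate $\lambda^{-n}$. All the information will come from comparing these rates after applying $f$, where the $\alpha$-H\"older bound distorts exponents by at most the factor $\alpha$.

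\textbf{Step 1: $\psi$ is loxodromic.} Take $z\in\Bb^m$. Then $\phi^n(z)\to x^+_\phi$ and $\phi^{-n}(z)\to x^-_\phi$, with $d_{\Bb^m}(z,\phi^n z)$ growing linearly in $n$. Since $f$ is proper, $f(\phi^{\pm n}z)=\psi^{\pm n}f(z)$ leaves every compact subset of $\Bb^M$, so $\psi$ is not elliptic. To exclude parabolic $\psi$, use the H\"older bound together with the fact that $f$ is $1$-Lipschitz for the Kobayashi metrics. A parabolic $\psi$ has sublinear displacement ($d(w,\psi^n w)=O(\log n)$). On the other hand, $1-\|\phi^n z\|\approx\lambda^{-2n}$ and $\|f(\phi^nz)-f(x^+_\phi)\|\le C\lambda^{-n}$, which forces $f(\phi^n z)$ to move from $f(z)$ at a rate comparable to $e^{cn}$ in Euclidean terms. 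Combined with the distinctness of $f(x^+_\phi)$ and $f(x^-_\phi)$ from Step 2, this gives two distinct attracting limit points of $\psi^{\pm n}$, which is impossible for a parabolic. The identification $f(x^\pm_\phi)=x^\pm_\psi$ then follows because $\psi^n f(z)\to f(x^+_\phi)$.

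\textbf{Step 2: $f(x^+_\phi)\ne f(x^-_\phi)$.} I expect this and Step 3 to be the main obstacle. If the two images coincided, then for every $z$ we would have $\psi^{n}f(z)$ and $\psi^{-n}f(z)$ converging to the same point, and $\psi$ would be parabolic. I would rule this out by a displacement/rate contradiction. Choose $z$ on the geodesic joining $x^-_\phi$ to $x^+_\phi$; its $f$-image is a quasi-geodesic-like curve whose two ends would both converge to one boundary point at the rate dictated by the H\"older estimate. Then $\|f(\phi^n z)-f(z)\|$ is bounded below by properness, which contradicts the polynomial convergence rate of parabolic orbits against the exponential rate $\lambda^{-\alpha n}$.

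\textbf{Step 3: Lines go to lines.} Let $\mu>1$ be the parameter of $\psi$. For $w\in\partial\Bb^m\cap L_\phi$ with $w\ne x^\pm_\phi$, Theorem~\ref{thm: contraction rates of loxodromics} gives $\|\phi^n w-x^+_\phi\|\approx\lambda^{-2n}$. H\"older continuity then yields
$$\|\psi^n f(w)-x^+_\psi\|\le C\lambda^{-2\alpha n}.$$
First calibrate $\mu$ against $\lambda$. For points with orbit inside $\Bb^M$, compare hyperbolic translation lengths: $f$ is Kobayashi distance nonincreasing, so $\log\mu\le\log\lambda$. Conversely, applying the H\"older estimate to a generic $z\notin L_\phi$ gives $\mu$-contraction of $f(z)$ at rate at least $\lambda^{-\alpha n}$. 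If $f(z)\notin L_\psi$, the exact rate is $\mu^{-n}$, so $\log\mu\ge\alpha\log\lambda$. Now suppose $f(w)\notin L_\psi$ with $f(w)\ne x^-_\psi$. Its exact rate is then $\mu^{-n}$, so $\mu^{-1}\le\lambda^{-2\alpha}$, that is, $\log\mu\ge 2\alpha\log\lambda>\log\lambda$ because $\alpha>1/2$. This contradicts $\log\mu\le\log\lambda$. Hence $f(w)\in L_\psi$ or $f(w)=x^-_\psi$, and the latter lies in $L_\psi$ anyway. For interior points of $L_\phi\cap\Bb^m$ I would argue by maximum-principle/holomorphy: $f$ restricted to the disc $L_\phi\cap\Bb^m$ is holomorphic with boundary values in the affine line $L_\psi$, so composing with a linear functional vanishing on $L_\psi$ gives a holomorphic function with zero boundary values, which is therefore identically zero. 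The delicate points are the translation-length inequality $\log\mu\le\log\lambda$ and ensuring that boundary points have exact contraction rates, with the exceptional point $x^-_\psi$ handled separately; this is where $\alpha>1/2$ is used sharply.
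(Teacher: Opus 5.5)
Your Step 3 is correct and is essentially the paper's argument. The Kobayashi translation-length inequality $\log\mu\le\log\lambda$ carries the same content as the paper's Schwarz lemma step (after normalizing $f(\vect{0})=\vect{0}$, so $\|f(z)\|\le\|z\|$) combined with Proposition~\ref{prop:approaching the boundary}. The contradiction $\log\mu\ge 2\alpha\log\lambda>\log\lambda$ is exactly the one the paper derives. The maximum-principle step for the interior of the disc is valid, but you should use affine rather than linear functionals cutting out $L_\psi$. It is also unnecessary: Theorem~\ref{thm: contraction rates of loxodromics} already gives the rate $\lambda^{-2n}$ at interior points of $L_\phi$, so the paper runs one argument for all $z\in L_\phi\cap\overline{\Bb^m}$.

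The gap is in Steps 1--2, that is, in showing $\psi$ is not parabolic. As written, nothing there proves it:
\begin{itemize}
\item The claim that the $f$-image of the geodesic is ``quasi-geodesic-like'' has no justification, since a holomorphic map is only distance non-increasing.
\item The lower bound on $\|f(\phi^nz)-f(z)\|$ from properness plays no role in any rate comparison.
\item A ``Euclidean rate $e^{cn}$'' is meaningless inside a bounded ball.
\end{itemize}
The only ingredient that could produce a contradiction is your assertion that parabolic orbits approach their fixed point only polynomially fast (equivalently $d(w,\psi^n w)=O(\log n)$). You state this without proof. It is true, but proving it is precisely the missing quantitative step: you need that a non-loxodromic $\psi$ has $\lambda_1(\psi)=1$, and that $1-\|g(\vect{0})\|=2/(\sigma_1(g)^2+1)$, so that $\tfrac1n\log(1-\|\psi^n(\vect{0})\|)\to-2\log\lambda_1(\psi)$. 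These are Theorem~\ref{thm:lambda1 char loxo} and Proposition~\ref{prop:approaching the boundary}.

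With these facts, the paper's short argument finishes the first assertion:
\begin{itemize}
\item Normalize $f(\vect{0})=\vect{0}$ and use $\|f(z/\|z\|)\|=1$; H\"older continuity then gives $1-\|f(z)\|\le C(1-\|z\|)^\alpha$.
\item Hence $1-\|\psi^n(\vect{0})\|=1-\|f(\phi^n(\vect{0}))\|\le C(1-\|\phi^n(\vect{0})\|)^\alpha$.
\item This yields $\lambda_1(\psi)\ge\lambda_1(\phi)^\alpha>1$.
\end{itemize}
This rules out the elliptic and parabolic cases at once. It also makes your Step 2 superfluous, since $f(x^\pm_\phi)=x^\pm_\psi$ are then automatically distinct. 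Separately, your H\"older bound for generic $z$ should read $\|f(\phi^nz)-f(x^+_\phi)\|\le C\lambda^{-\alpha n}$, not $C\lambda^{-n}$.
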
 

Theorem~\ref{thm:loxo lines are mapped to loxo lines - intro} says that certain complex affine lines are mapped into complex affine lines. To upgrade this to all complex affine lines we use the Zariski density assumption. Suppose $f$ satisfies the hypothesis of Theorem~\ref{thm:main} and for $x,y \in \partial \Bb^m$ distinct let $L_{xy}$ be the complex affine line containing $x,y$. To prove that $ f\left(L_{xy} \cap \overline{\Bb^m}\right)$ is contained in a complex affine line for all such $x,y$, we argue as follows:
\begin{enumerate}
\item Let $\Gamma$ be the image of the projection $\Gsf_f \rightarrow \Aut(\Bb^m)$. Using the density of loxodromic fixed points in the limit set $\Lambda(\Gamma)$ of $\Gamma$, we show that if $x,y \in \Lambda(\Gamma)$ are distinct,  then $ f\left(L_{xy} \cap \overline{\Bb^m}\right)$ is contained in a complex affine line (see Corollary~\ref{cor:complex lines intersecting the limit set}). 
\item Using the fact that $f$ is holomorphic, we prove that if $x \in \partial \Bb^m$, then the set 
\[
Z_{x} := \left\{y \in \partial\Bb^m\smallsetminus\{x\} : f\left(L_{xy} \cap \overline{\Bb^m}\right) \text{ is contained in a complex affine line}\right\}
\]
 is a real analytic variety (see Theorem~\ref{thm: Z is a real analaytic variety}). 
\item Using the Zariski density assumption we prove: if $x \in \partial \Bb^m$ and $Z \subset \partial\Bb^m\smallsetminus\{x\}$ is a real analytic variety with $\Lambda(\Gamma) \smallsetminus\{x\} \subset Z$, then $Z = \partial\Bb^m\smallsetminus\{x\}$ (see Theorem~\ref{thm:analytic envelopes}). 
\item Using (2) and (3), we show that  $ f\left(L_{xy} \cap \overline{\Bb^m}\right)$ is contained in a complex affine line for all $x,y$ distinct (see Section~\ref{sec:final section}). 
\end{enumerate} 

The proof of (3) uses the contracting properties of loxodromic elements to rescale $Z$ to obtain a real analytic variety containing the limit set. 

\subsection*{Acknowledgments} This paper is the result of a Spring 2025 Madison Experimental Mathematics Lab project at the University of Wisconsin-Madison. Srimurthy, Huang, and Park were undergraduate researchers; Skenderi and Wen were graduate student mentors; and Zimmer was the faculty mentor. We thank the Department of Mathematics at the University for supporting this program. 

Huang and Skenderi were partially supported by grant DMS-2230900 from the National Science Foundation. Wen was partially supported by grant DMS-2105580 from the National Science Foundation. Zimmer was partially supported by a Sloan research fellowship and grants DMS-2105580 and DMS-2452068 from the National Science Foundation.

\section{Background}

\subsection{Possibly ambiguous notation} 
\begin{itemize}
\item For convenience, we use row and column vectors interchangeably. 
\item $\norm{\cdot}$ denotes the standard $\ell^2$-norm on complex Euclidean space.
\item We always identify $\Aut(\Bb^m)=\PU(m,1)$, see Section~\ref{sec:background automorphism group}.
\end{itemize}

\subsection{The automorphism group of the unit ball}\label{sec:background automorphism group}

In this section we recall some basic results about the automorphism group of the unit ball. For more details, see~\cite[Section 2.2]{Abate1989}.

Let $\Usf(m,1)< \GL(m+1,\Cb)$ be the group which preserves the indefinite inner product 
$$[v,w]_{m,1}:= v_1 \overline{w_1} +\cdots +v_m \overline{w_m} -v_{m+1} \overline{w_{m+1}}
$$
on $\mathbb{C}^{m+1}$. 
In this paper, we identify the projectivization 
$$
\PU(m,1) = \Usf(m,1) / \Sb^1 \cdot \Id
$$
with $\Aut(\Bb^m)$ via the action $\PU(m,1) \curvearrowright  \Bb^m$ given by
$$
g(z) = \frac{Az+b}{{c^T}z+d}
$$
where 
\begin{align*}
g = \begin{bmatrix} A & {b} \\ {c^T} & d \end{bmatrix} \in \PU(m,1)
\end{align*}
with $A$ an $m$-by-$m$ matrix; $c,b \in \Cb^m$; and $d \in \Cb$. 

Next we introduce some special subgroups of $\Aut(\Bb^m)$. Let 
$$
\Asf:= \{ a_t : t \in \Rb\} < \Aut(\Bb^m)
$$
where 
\begin{align*}
a_t := \begin{bmatrix} \cosh(t) & 0 & \sinh(t) \\ 0 & \Id_{m-1} & 0 \\ \sinh(t) & 0 & \cosh(t) \end{bmatrix}.
\end{align*}

Also, let $\Ksf < \Aut(\Bb^m)$ denote the stabilizer of $\vect{0}$ in $\Aut(\Bb^m)$. One can show that
\begin{align*}
 \Ksf = \left\{ \begin{bmatrix} U & 0 \\ 0 & 1 \end{bmatrix} : U \in \Usf(m) \right\} < \Aut(\Bb^m),
\end{align*} 
for a proof see \cite[Corollary 2.2.2]{Abate1989}. Also, notice that $\Ksf$ preserves the $\ell^2$-norm, i.e. $||k(v)||= ||v||$ for all $v \in \Cb^m$ and $k \in \Ksf$.

Finally, let $\Msf$ be the centralizer of $\Asf$ in $\Ksf$, that is 
\begin{align*}
\Msf = \left\{ \begin{bmatrix} 1 & 0 & 0 \\ 0& U & 0 \\ 0 & 0 & 1 \end{bmatrix} : U \in \Usf(m-1) \right\} < \Aut(\Bb^m).
\end{align*}

Any element $g \in \Aut(\Bb^{m})$ extends to a smooth map $\overline{\Bb^{m}} \rightarrow \overline{\Bb^{m}}$, and so by the Brouwer fixed point theorem this extension has at least one fixed point in $\overline{\Bb^{m}}$. In fact, every automorphism of the ball is exactly one of the following three types.

\begin{theorem}[{see e.g. \cite[Proposition 2.2.9]{Abate1989}}]
An element $g \in \Aut(\Bb^{m})$ is either:
\begin{itemize}
    \item[(a)] $\textit{elliptic}$, i.e. it fixes at least one point in $\Bb^{m}$, 
    \item[(b)] $\textit{parabolic}$, i.e. it is not elliptic and fixes exactly one point in $\partial \Bb^{m}$, or
    \item[(c)] $\textit{loxodromic}$, i.e. it is not elliptic and fixes exactly two points in $\partial \Bb^{m}$.  
\end{itemize}
\end{theorem}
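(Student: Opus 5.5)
The plan is to work with the linear action of $\Usf(m,1)$ on $\Cb^{m+1}$ and transfer fixed points back and forth between $\overline{\Bb^m}$ and lines in $\Cb^{m+1}$. Concretely, a point $z \in \overline{\Bb^m}$ corresponds to the line spanned by $(z,1)$, and $z$ is fixed by $g$ exactly when $(z,1)$ is an eigenvector of a lift $\tilde g \in \Usf(m,1)$. The cone of negative vectors ($[v,v]_{m,1}<0$) corresponds to $\Bb^m$, and the null cone ($[v,v]_{m,1}=0$) corresponds to $\partial \Bb^m$. Brouwer's theorem, as already noted, gives at least one fixed point in $\overline{\Bb^m}$.

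\textbf{Step 1: isotropic eigenvectors.} Suppose $\tilde g v = \mu v$ with $[v,v]_{m,1}=0$ and $|\mu| \neq 1$. Then $[\tilde g w, v] = [w, \tilde g^{-1} v] = \bar\mu^{-1}[w,v]$. Taking an eigenvector $w$ with eigenvalue $\nu$ gives $(\nu\bar\mu - 1)[w,v]=0$.

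\textbf{Step 2: the case analysis.} If $g$ has no fixed point in $\Bb^m$, every fixed point is isotropic, and I split into two cases.

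\emph{Case (i): some fixed boundary point has eigenvalue with $|\mu|\neq 1$.} Since $\det$ has modulus one and the form restricted to $v^\perp/v$ is definite, there is a second eigenvalue $\bar\mu^{-1}$. Its eigenvector $w$ satisfies $[w,v]\neq 0$, because the form is nondegenerate and the other eigenvalues pair to zero with $v$. The form is then of signature $(1,1)$ on $\mathrm{span}(v,w)$ and definite on the orthogonal complement. So $w$ is isotropic, which gives a second boundary fixed point.

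These are the only fixed points in $\overline{\Bb^m}$. On the positive definite complement, eigenvectors are positive vectors, so they give no points of $\overline{\Bb^m}$. Any combination mixing eigenvalues $\mu$ and $\bar\mu^{-1}$ is not an eigenvector, and within an eigenspace of $\mu$ the isotropic vectors are multiples of $v$. Hence there are exactly two fixed points, and $g$ is loxodromic.

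\emph{Case (ii): all boundary fixed points have unimodular eigenvalue.} I need to show there is exactly one fixed point. Suppose there were two, $v_1$ and $v_2$, which are distinct isotropic vectors, so $[v_1,v_2]\neq 0$. Then $\mathrm{span}(v_1,v_2)$ contains a negative vector. Step 1 with $\nu\bar\mu=1$ forces $\mu_1 = \mu_2$ when both are unimodular: indeed $[v_1,v_2]\neq0$ requires $\mu_2\bar\mu_1 = 1$, i.e.\ $\mu_2=\mu_1$. So $v_1+v_2$ is a negative eigenvector, giving an interior fixed point. This contradicts non-ellipticity, so $g$ is parabolic.

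\textbf{Step 3: mutual exclusivity.} The three types are exclusive by definition, since elliptic is excluded from both of the others and parabolic and loxodromic differ by the number of boundary fixed points. The only point needing care is that a non-elliptic $g$ with at least three boundary fixed points cannot occur. Step 2 handles this: in case (i) there are exactly two fixed points, and in case (ii) two isotropic fixed points already force an interior one.

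The main obstacle is the linear-algebra bookkeeping in Case (i), namely ruling out extra fixed points from generalized eigenvectors or from repeated eigenvalues. I would handle this by decomposing $\Cb^{m+1}$ as $\mathrm{span}(v,w) \oplus \mathrm{span}(v,w)^\perp$, which is $\tilde g$-invariant with a definite second factor.
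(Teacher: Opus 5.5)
The paper gives no proof of this statement; it only cites \cite[Proposition 2.2.9]{Abate1989}. Your argument is a correct, self-contained proof. It identifies fixed points in $\overline{\Bb^m}$ with non-positive eigenlines of a lift $\tilde g\in\Usf(m,1)$. It then uses the relation $(\nu\bar\mu-1)[w,v]_{m,1}=0$, which in fact holds for any two eigenvectors, isotropic or not; that is also how you use it in Case (ii). Brouwer supplies one boundary fixed point, and you split according to whether an eigenvalue lies off the unit circle.

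Compared with the bare citation, your route also gives a spectral description of the types. A non-elliptic $g$ is loxodromic exactly when $\tilde g$ has an eigenvalue off the unit circle, since such an eigenvalue always has an isotropic eigenvector. An elliptic $g$ has a negative eigenvector $u$ whose eigenvalue is unimodular by Step 1, and $u^\perp$ is positive definite and $\tilde g$-invariant, so all its eigenvalues are unimodular. Together these give the paper's Theorem~\ref{thm:lambda1 char loxo}, which the paper instead derives from the normal forms in \cite[Proposition 2.2.10]{Abate1989}.

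If you only want the trichotomy as stated, Case (i) is more than you need. Three distinct boundary fixed points give pairwise non-orthogonal isotropic eigenvectors, so Step 1 forces $\mu_1\bar\mu_2=\mu_2\bar\mu_3=\mu_1\bar\mu_3=1$. Hence $\mu_1=\mu_3$ and $|\mu_1|=1$, so all three eigenvalues coincide, and your Case (ii) argument produces an interior fixed point.

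Some justifications need repair, though none affects the conclusion:
\begin{itemize}
\item The determinant only gives an eigenvalue $\nu$ with $|\nu|=|\mu|^{-1}$. The exact value $\nu=\bar\mu^{-1}$ comes from the induced action on $\Cb^{m+1}/v^\perp$, which by your Step 1 identity is multiplication by $\bar\mu^{-1}$.
\item The non-orthogonality $[w,v]_{m,1}\neq 0$ is cleanest as $w\notin v^\perp$: $v^\perp$ is $\tilde g$-invariant and $\tilde g|_{v^\perp}$ has only $\mu$ and unimodular eigenvalues. Your planned splitting $\mathrm{span}(v,w)\oplus\mathrm{span}(v,w)^\perp$ presupposes this fact, so it cannot be used to prove it.
\item The isotropy of $w$ does not follow from the signature of $\mathrm{span}(v,w)$, since any vector not orthogonal to $v$ spans a $(1,1)$-plane with $v$. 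It follows from $[w,w]_{m,1}=|\nu|^2[w,w]_{m,1}$ with $|\nu|\neq 1$.
\item In Case (ii), $v_1+v_2$ is negative only because of your normalization $v_i=(z_i,1)$, for which $[v_1+v_2,v_1+v_2]_{m,1}=-\|z_1-z_2\|^2<0$. With arbitrary representatives, take $v_1-[v_1,v_2]_{m,1}v_2$ instead.
\end{itemize}
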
 

Loxodromic elements have the following dynamical behavior.

\begin{theorem}\label{thm:structure of loxodromic} If $g \in \Aut(\Bb^m)$ is loxodromic, then $g = h ka_t h^{-1}$ for some $h \in \Aut(\Bb^m)$, $k \in \Msf$, and $t  > 0$. Moreover, the fixed points of $g$ are $x_g^\pm = h(\pm e_1)$ and 
$$
\lim_{n \rightarrow \pm \infty} g^{n}(z)= x_g^\pm
$$
for all $z \in \overline{\Bb^m} \smallsetminus \{x_g^\mp \}$ (in fact, the convergence is uniform on compact subsets of $\overline{\Bb^m} \smallsetminus \{x_g^\mp \}$). 
 \end{theorem}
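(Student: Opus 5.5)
Since $g$ is loxodromic it fixes no point of $\Bb^m$ and exactly two points $p,q\in\partial\Bb^m$. First move $p$ to $e_1$ and $q$ to $-e_1$. $\Aut(\Bb^m)$ acts transitively on ordered pairs of distinct boundary points: use $\Ksf$ to send $p$ to $e_1$, then use the stabilizer of $e_1$, which contains the parabolic/Heisenberg translations acting simply transitively on $\partial\Bb^m\smallsetminus\{e_1\}$. So there is $h$ with $h(e_1)=p$ and $h(-e_1)=q$. Set $g' = h^{-1}gh$; then $g'$ is loxodromic and fixes exactly $\pm e_1$ on $\partial\Bb^m$.

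Next I analyze $g'$ as an element of $\Usf(m,1)$. Fixing $\pm e_1$ means it preserves the two isotropic lines spanned by $e_1\pm e_{m+1}$. It therefore preserves their $[\cdot,\cdot]_{m,1}$-orthogonal complement $W=\mathrm{span}(e_2,\dots,e_m)$, which is positive definite, and it preserves $\mathrm{span}(e_1,e_{m+1})$. On $W$ it acts by some $U\in\Usf(m-1)$. On the plane $\mathrm{span}(e_1,e_{m+1})$ it acts diagonally in the basis $e_1\pm e_{m+1}$, with eigenvalues $\mu,\nu$.

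The key computation is on $u=e_1+e_{m+1}$ and $w=e_1-e_{m+1}$. We have $[u,w]=2$, and preservation of the form gives $\mu\bar\nu=1$. After scaling by $\Sb^1$ we may take $\mu=e^{t}e^{i\theta}$ and $\nu=e^{-t}e^{i\theta}$. Note $a_t$ has eigenvalues $e^{\pm t}$ on $u,w$. So, up to the scalar $e^{i\theta}$, the element $g'$ equals $\mathrm{diag}(1,e^{-i\theta}U,1)\cdot a_t$ in $\PU(m,1)$. This gives $g'=ka_t$ with $k\in\Msf$, and $k$ commutes with $a_t$.

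The value $t\neq0$ because $g$ is not elliptic. If $t=0$, $g'$ would be unitary block-diagonal and fix $\vect 0$. If $t<0$, replace $h$ by $h\circ R$, where $R\in\Ksf$ is the map $z_1\mapsto -z_1$. This swaps $\pm e_1$ and conjugates $a_t$ to $a_{-t}$. This yields $g=hka_th^{-1}$ with $t>0$, and the fixed points are $h(\pm e_1)$.

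For the dynamics I compute directly. We have $(ka_t)^n=k^na_{nt}$, and
$$a_{s}(z)=\Big(\tfrac{z_1\cosh s+\sinh s}{z_1\sinh s+\cosh s},\ \tfrac{z'}{z_1\sinh s+\cosh s}\Big).$$
As $s\to+\infty$, the denominator is $\approx \tfrac{e^s}{2}(1+z_1)$, which stays away from $0$ when $z_1\ne -1$. On $\overline{\Bb^m}$ the condition $z_1=-1$ holds only at $-e_1$. So $a_s(z)\to e_1$, uniformly on compact subsets avoiding $-e_1$. The map $k^n$ fixes $e_1$ and acts isometrically, so $k^na_{nt}(z)\to e_1$ as well. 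Conjugating by the homeomorphism $h$ of $\overline{\Bb^m}$ gives the $+$ case. The $-$ case is symmetric, since $z_1\sinh s+\cosh s$ with $s\to-\infty$ is $\approx \tfrac{e^{-s}}{2}(1-z_1)$.

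The main care needed is the linear-algebra step: checking that $g'$ preserves $W$, and that the scalar ambiguity is absorbed properly into $\Msf$.
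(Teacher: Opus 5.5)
Your proof is correct, but it reaches the normal form by a different route than the paper. For the first assertion, $g = hka_th^{-1}$, the paper simply cites Abate (Proposition 2.2.10). It then says the dynamics follow from a computation once one reduces to $g = ka_t$.

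You instead prove the normal form directly:
\begin{itemize}
\item double transitivity on $\partial\Bb^m$ moves the fixed points to $\pm e_1$;
\item a lift then preserves the isotropic lines spanned by $u = e_1+e_{m+1}$ and $w = e_1-e_{m+1}$, hence the plane $\mathrm{span}(e_1,e_{m+1})$ and its positive definite orthogonal complement;
\item the relation $[u,w]_{m,1}=2$ forces the eigenvalues on $u,w$ to be $e^{\pm t}e^{i\theta}$, so removing the scalar $e^{i\theta}$ leaves $ka_t$ with $k\in\Msf$;
\item non-ellipticity gives $t\neq 0$, and conjugating by $z_1\mapsto -z_1$ (which commutes with $\Msf$) gives $t>0$.
\end{itemize}

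Your dynamics argument is the same computation the paper has in mind; it is essentially Equation~\eqref{eqn:action of at}. You make the uniformity on compacta explicit: $\abs{1+z_1}$ is bounded below on compact sets avoiding $-e_1$, and $k^n$ is a Euclidean isometry fixing $e_1$.

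What your route buys, beyond self-containment, is a stronger intermediate fact. Your linear-algebra step never uses loxodromicity until you exclude $t=0$. So it shows that every element of $\Aut(\Bb^m)$ fixing both $e_1$ and $-e_1$ lies in $\Msf\Asf$. That is exactly the fact the paper asserts with ``it is easy to show'' in the proof of Proposition~\ref{prop:density of loxs}. The paper's route is shorter but rests on the external reference.

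A small wording point: you do not need any $\Sb^1$-rescaling to get $\mu = e^{t}e^{i\theta}$ and $\nu = e^{-t}e^{i\theta}$, since $\mu\bar\nu = 1$ already forces this. The rescaling is only needed afterwards, to remove the common factor $e^{i\theta}$.
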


 \begin{proof} A proof of the first assertion can be found ~\cite[Proposition 2.2.10]{Abate1989}. Since $ka_t$ has fixed points $\pm e_1$ (where $e_1$ is the first standard basis vector), we see that $g$ has fixed points $x_g^\pm = h(\pm e_1)$. To establish the last assertion, it suffices to consider the case when $g = ka_t$ for some $t > 0$. Then the claim follows from a computation. 
 \end{proof} 
 
 \begin{remark} We will always assume that the fixed points of a loxodromic element are labeled as in Theorem~\ref{thm:structure of loxodromic}.\end{remark}
 
 We will also use the so-called $\mathsf{KAK}$-decomposition. 
 
 \begin{theorem}\label{thm:KAK} Any $g \in \Aut(\Bb^m)$ can be written as $g = k_1 a_t k_2$ for some $k_1,k_2 \in \Ksf$ and $t \geq 0$. \end{theorem}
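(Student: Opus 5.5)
We prove the $\mathsf{KAK}$-decomposition by the standard polar-decomposition argument, using only the transitivity of $\Aut(\Bb^m)$ on $\Bb^m$ and the description of $\Ksf$ as the stabilizer of $\vect{0}$.

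\textbf{Step 1: a translation in $\Asf$.} Let $g \in \Aut(\Bb^m)$ and set $w := g(\vect{0}) \in \Bb^m$. A direct computation from the matrix formula gives
$$
a_t(\vect{0}) = \left(\frac{\sinh t}{\cosh t}, 0, \dots, 0\right) = (\tanh(t), 0,\dots,0).
$$
As $t$ ranges over $[0,\infty)$, the quantity $\tanh(t)$ ranges over $[0,1)$. So there is a unique $t \geq 0$ with $\tanh(t) = \norm{w}$.

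\textbf{Step 2: a rotation in $\Ksf$.} Since $\Usf(m)$ acts transitively on each sphere $\{ v : \norm{v} = r\}$, there is $U \in \Usf(m)$ with $U(\tanh(t) e_1) = w$. Let $k_1 \in \Ksf$ be the element with block $U$. Since $\Ksf$ acts on $\Bb^m$ by $z \mapsto Uz$, we get $k_1 a_t(\vect{0}) = w = g(\vect{0})$.

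\textbf{Step 3: the remaining factor.} Set $k_2 := (k_1 a_t)^{-1} g$. Then
$$
k_2(\vect{0}) = (k_1 a_t)^{-1}\bigl(g(\vect{0})\bigr) = \vect{0},
$$
so $k_2$ lies in the stabilizer of $\vect{0}$, which is $\Ksf$ by \cite[Corollary 2.2.2]{Abate1989}. Therefore $g = k_1 a_t k_2$ with $k_1, k_2 \in \Ksf$ and $t \geq 0$, as claimed.

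None of these steps is a genuine obstacle. The only care needed is in Step 1, where $a_t(\vect{0})$ must be computed with the projective action, i.e. dividing by the last coordinate $\cosh(t)$.
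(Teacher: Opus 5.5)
Your proof is correct and is essentially the paper's argument. You match $g(\vect{0})$ with $a_t(\vect{0}) = (\tanh t, 0, \dots, 0)$ using the transitivity of $\Ksf$ on spheres (the paper instead rotates $g(\vect{0})$ onto the $e_1$-axis, which is equivalent), and then note that the remaining factor fixes $\vect{0}$ and so lies in $\Ksf$.
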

 
Since the proof is short we include it here. 

\begin{proof} Since $\Ksf$ acts transitively on spheres centered at $\vect{0}$, we can find $k_1 \in \Ksf$ such that $k_1^{-1}g(\vect{0}) = (\norm{g(\vect{0})}, 0,\dots, 0)$. Since 
$$
\{a_t(\vect{0}) : t \geq 0\} = \{ (\tanh t, 0,\dots, 0) : t \geq 0\}=[0,1)\times \{(0,\dots,0)\},
$$
we can pick $t \geq 0$ such that $a_{t}^{-1} k_1^{-1} g(\vect{0}) = \vect{0}$. Since $\Ksf$ is the stabilizer of $\vect{0}$ in $\Aut(\Bb^m)$,  we then have $a_{t}^{-1} k_1^{-1} g=k_2$ for some $k_2 \in \Ksf$, equivalently $g = k_1 a_t k_2$.
\end{proof}

\subsection{The limit set and North-South Dynamics}

The \emph{limit set}, denoted $\Lambda(\Gamma)$, of a subgroup $\Gamma < \Aut(\Bb^m)$ is the set of accumulation points in $\partial \Bb^m$ of the orbit $\Gamma \cdot x$ for some (any) $x \in \Bb^m$. In the literature, one normally encounters limit sets of discrete subgroups of Lie groups, but we will consider the limit sets of more general (not necessarily discrete) subgroups of $\Aut(\Bb^m)$. Notice that $\Lambda(\Gamma)$ is a $\Gamma$-invariant subset of $\partial \Bb^m$. 

We will use the following result on the North-South-like dynamics of the action of $\Aut(\Bb^m)$ on $\overline{\Bb^m}$.

\begin{theorem}\label{thm:North South} If $\{g_n\} \subset \Aut(\Bb^m)$, $g_n(\vect{0}) \rightarrow x \in \partial \Bb^m$, and $g_n^{-1}(\vect{0}) \rightarrow y\in \partial \Bb^m$, then 
$$
\lim_{n \rightarrow \infty} g_n(z) =x
$$
for all $z \in \overline{\Bb^m} \smallsetminus \{y\}$, and the convergence is uniform on compact subsets of $\overline{\Bb^m} \smallsetminus \{y\}$.
\end{theorem}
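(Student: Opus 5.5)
The plan is to reduce to the model sequence $a_{t_n}$ using the $\mathsf{KAK}$-decomposition of Theorem~\ref{thm:KAK}, and then use the explicit formula for $a_t$. Write $g_n = k_n a_{t_n} k_n'$ with $k_n,k_n' \in \Ksf$ and $t_n \ge 0$.

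First, $t_n \to \infty$. Indeed, $\norm{g_n(\vect{0})} = \norm{a_{t_n}(\vect{0})} = \tanh t_n$, and this tends to $1$. Next, $g_n(\vect{0}) = k_n(\tanh t_n\, e_1) \to x$, so $k_n e_1 \to x$. Similarly $g_n^{-1} = k_n'^{-1} a_{-t_n} k_n^{-1}$ and $a_{-t_n}(\vect{0}) = -\tanh(t_n) e_1$, so $k_n'^{-1}(-e_1) \to y$.

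Now fix a compact set $C \subset \overline{\Bb^m}\smallsetminus\{y\}$. Since $\Ksf$ is compact, we may pass to subsequences, which is enough: if the uniform convergence failed, some subsequence would stay a definite distance away, and we get a contradiction along a further subsequence. So assume $k_n \to k$ and $k_n' \to k'$. Then $k e_1 = x$ and $k'^{-1}(-e_1) = y$.

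The sets $k_n'(C)$ eventually lie in a fixed compact set $C' \subset \overline{\Bb^m}\smallsetminus\{-e_1\}$. To see this, note that $\operatorname{dist}(C, y) > 0$, the maps $k_n'$ are isometries of $\norm{\cdot}$, and $k_n'(y) \to -e_1$. Hence $\operatorname{dist}(k_n'(C), -e_1)$ is bounded below by some $\delta > 0$ for large $n$.

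The key computation is that $a_t(w) \to e_1$ uniformly on $\{w \in \overline{\Bb^m} : \norm{w + e_1} \ge \delta\}$ as $t \to \infty$. Explicitly,
$$
a_t(w) = \left(\frac{\cosh t\, w_1 + \sinh t}{\sinh t\, w_1 + \cosh t}, \frac{w'}{\sinh t\, w_1 + \cosh t}\right).
$$
Dividing numerator and denominator by $\cosh t$, the denominator becomes $\tanh t\, w_1 + 1$. Its modulus is at least $\abs{1 + w_1} - (1 - \tanh t)$. On $\overline{\Bb^m}$, the condition $\norm{w+e_1} \ge \delta$ forces $\abs{1+w_1} \ge \delta^2/2$, because $\norm{w+e_1}^2 = \abs{1+w_1}^2 + \norm{w'}^2 \le \abs{1+w_1}^2 + 1 - \abs{w_1}^2 \le 2\abs{1+w_1}$. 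So the denominator is uniformly bounded away from zero for large $t$. The first coordinate minus $1$ equals $(1 - \tanh t)(1 - w_1)$ divided by this denominator, which tends to $0$ uniformly. The remaining coordinates are $w'/\cosh t$ divided by the denominator, which also tends to $0$ uniformly.

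Finally, $g_n(z) = k_n\big(a_{t_n}(k_n' z)\big)$, and $k_n \to k$ uniformly as linear maps. Hence $g_n(z) \to k e_1 = x$ uniformly on $C$. The only step needing care is the lower bound on the denominator near $-e_1$; the inequality above handles it, and the rest is bookkeeping with subsequences.
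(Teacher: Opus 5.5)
Your proof is correct and follows the same route as the paper. Both use the $\mathsf{KAK}$-decomposition $g_n = k_n a_{t_n} k_n'$, the facts $t_n \to \infty$, $k_n e_1 \to x$ and $(k_n')^{-1}(-e_1) \to y$, and the uniform convergence $a_t \to e_1$ away from $-e_1$; you additionally supply the explicit estimates that the paper leaves to the reader, namely the bound $\abs{1+w_1} \geq \delta^2/2$ and the lower bound on the distance from $k_n'(C)$ to $-e_1$.

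Two harmless remarks. First, the numerator of the first coordinate minus $1$ is $(1-\tanh t)(w_1-1)$ rather than $(1-\tanh t)(1-w_1)$; this sign slip changes nothing. Second, the passage to subsequences with $k_n \to k$ is not needed: each $k_n$ is a Euclidean isometry and $k_n e_1 \to x$, so $\norm{g_n(z) - x} \leq \norm{a_{t_n}(k_n'z) - e_1} + \norm{k_n e_1 - x}$ already gives the uniform convergence.
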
 

This is well-known, but since the proof is short we include it here. 

\begin{proof} Using Theorem~\ref{thm:KAK}, we can write $g_n = k_{n,1} a_{t_n} k_{n,2}$ where $k_{n,1}, k_{n,2} \in \Ksf$ and $t_n \geq 0$. Since 
$$
1 = \lim_{n \rightarrow \infty} \norm{g_n(\vect{0})} =  \lim_{n \rightarrow \infty}\norm{a_{t_n}(\vect{0})} 
$$
we have $t_n \rightarrow +\infty$. Notice that 
\begin{equation}\label{eqn:action of at}
\lim_{t \rightarrow \pm \infty} a_{t}(z) = \lim_{t \rightarrow \infty} \frac{(\cosh(t) z_1 + \sinh(t), z_2, \dots, z_m)}{\sinh(t)z_1 + \cosh(t)}= \pm e_1
\end{equation} 
for all $z \in \overline{\Bb^m} \smallsetminus \{\mp e_1\}$, with the convergence being uniform on compact subsets of $\overline{\Bb^m} \smallsetminus \{\mp e_1\}$. Then, since 
$$
x= \lim_{n \rightarrow \infty} k_{1,n} a_{t_n}(\vect{0}) \quad \text{and} \quad y= \lim_{n \rightarrow \infty} k_{2,n}^{-1} a_{-t_n}(\vect{0})
$$
we have $k_{1,n}(e_1) \rightarrow x$ and $k_{2,n}^{-1}(-e_1) \rightarrow y$. These limits combined with Equation~\eqref{eqn:action of at} imply the result. 
\end{proof}

\subsection{Zariski Density}\label{sec:Zariski-Density}
 In this section, we provide an elementary definition of Zariski dense subgroups of $\Aut(\Bb^m)$. 

Given a real or complex vector space $V$, a function $P : V \rightarrow \Rb$ is a \emph{real polynomial} if for some (any) $\Rb$-basis $b_1,\dots, b_n$ of $V$,  the map 
$$
(x_1,\dots, x_n) \in \Rb^n \mapsto P(x_1 b_1 + \cdots + x_n b_n) \in \Rb
$$
is a polynomial in $x_1,\dots, x_n$.

In the next definition, let $M_{m+1}(\Cb)$ denote the vector space of $(m+1)$-by-$(m+1)$ complex matrices. 

\begin{definition}\label{def:Zariski density} Given a subgroup $H < \Aut(\Bb^m)=\PU(m,1)$, let $\tilde{H}$ denote the preimage of $H$ in $\Usf(m,1)$ under the map $g \in \Usf(m,1) \mapsto [g] \in \PU(m,1)$. Then,  $H$ is \emph{Zariski dense} if whenever $P : M_{m+1}(\Cb) \rightarrow \Rb$ is a real polynomial with $P|_{\tilde H} \equiv 0$, then $P|_{\Usf(m,1)} \equiv 0$. 
\end{definition}

We record some well-known facts about Zariski dense subgroups in semisimple Lie groups. Since the proofs are much more elementary for $\PU(m,1)$ we include them.

\begin{proposition}\label{prop:properties Z dense} Suppose $\Gamma < \Aut(\Bb^m)$ is Zariski dense. Then: 
\begin{enumerate}
\item  $\Lambda(\Gamma) \neq \emptyset$. 
\item If $P : \Cb^m \rightarrow \Rb$ is a real polynomial, $z_0 \in \partial \Bb^m$, and $P|_{\Gamma \cdot z_0} \equiv 0$, then $P|_{\partial \Bb^m} \equiv 0$.
\item If $z_0 \in \partial \Bb^m$, then the orbit $\Gamma \cdot z_0$ is infinite. 
\item If $P : \Cb^m \rightarrow \Rb$ is a real polynomial and $P|_{\Lambda(\Gamma)} \equiv 0$, then $P|_{\partial \Bb^m} \equiv 0$.
\end{enumerate}
\end{proposition}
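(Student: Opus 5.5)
The four parts all come from one device: pull polynomial conditions on $\partial\Bb^m$ back to real polynomial conditions on matrices in $\Usf(m,1)$, and then apply Definition~\ref{def:Zariski density}. I would prove (2) first, since (3), (1) and (4) follow from it.

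\emph{Proof of (2).} Fix $z_0 \in \partial \Bb^m$ and set $v_0 = (z_0,1) \in \Cb^{m+1}$. For $g \in \Usf(m,1)$ write $gv_0 = (w(g), \lambda(g))$. Since $[gv_0,gv_0]_{m,1} = [v_0,v_0]_{m,1}=0$ and $gv_0 \neq 0$, we have $\lambda(g) \neq 0$ and $[g](z_0) = w(g)/\lambda(g)$. Moreover $\norm{w(g)} = \abs{\lambda(g)}$. If $P$ has degree $\leq N$, define
$$
Q(g) := \abs{\lambda(g)}^{2N} P\bigl(w(g)/\lambda(g)\bigr).
$$
Each monomial of degree $k$ in the real coordinates of $w/\lambda$ becomes, after multiplying by $\lambda^N\bar\lambda^N$, a real polynomial in the entries of $g$. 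So $Q$ is a real polynomial on $M_{m+1}(\Cb)$. By hypothesis $Q$ vanishes on $\tilde\Gamma$, so by Zariski density $Q$ vanishes on $\Usf(m,1)$. Since $\Usf(m,1)$ acts transitively on $\partial \Bb^m$ and $\lambda \neq 0$ there, $P$ vanishes on $\partial\Bb^m$.

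\emph{Proof of (3).} Suppose instead that $\Gamma\cdot z_0$ is finite. Then there is a real polynomial vanishing on this finite set but not on all of $\partial\Bb^m$: take a product of the squared distances $\norm{z-p}^2$ over the finitely many points $p$. This contradicts (2).

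\emph{Proof of (1).} Suppose $\Lambda(\Gamma)=\emptyset$. Then the orbit $\Gamma\cdot\vect 0$ is relatively compact in $\Bb^m$, so $\Gamma$ lies in a compact subgroup, namely a conjugate $h\Ksf h^{-1}$, because the closure of $\Gamma$ is compact and fixes a point, for instance a barycenter. A compact subgroup is real algebraic. Concretely, $h\Ksf h^{-1}$ is the stabilizer of $h(\vect 0)$, and the condition that $g$ fixes the line through $(h(\vect 0),1)$ is given by real polynomial equations in the entries of $g$. These equations vanish on $\tilde\Gamma$ but not on all of $\Usf(m,1)$, contradicting Zariski density. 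A simpler route would be to use (3) and compactness, but the argument above is direct.

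\emph{Proof of (4).} This is the main step. Pick $x \in \Lambda(\Gamma)$, which exists by (1). Since $\Lambda(\Gamma)$ is $\Gamma$-invariant, $\Gamma\cdot x \subset \Lambda(\Gamma)$. So $P$ vanishes on $\Gamma\cdot x$, and (2) gives $P|_{\partial\Bb^m}\equiv 0$.

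The only delicate points are the homogenization in (2), where one checks that $Q$ is genuinely polynomial and that $\lambda\neq0$ on the null cone, and the compactness argument in (1). Everything else is formal.
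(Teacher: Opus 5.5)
Your proof is correct and follows the paper's argument part by part: clearing denominators so that $P$ evaluated at $g(z_0)$ becomes a real polynomial on matrices for (2), a product of squared distances for (3), a fixed point in $\Bb^m$ followed by a real polynomial vanishing on the lifted stabilizer but not on $\Usf(m,1)$ for (1), and $\Gamma$-invariance of $\Lambda(\Gamma)$ together with (1) and (2) for (4). The differences are cosmetic: your explicit denominator $\abs{\lambda}^{2N}$, and your line-stabilizer polynomial in place of the paper's $\norm{(h^{-1}Xh)^*h^{-1}Xh-\Id}^2$. One caution about your aside that (1) could instead follow from (3) and compactness: this does not work, since $\Gamma=\Ksf$ satisfies the conclusions of (2) and (3) but has empty limit set, which is exactly why (1) needs the matrix-level argument you gave.
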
 

\begin{proof} (1). Suppose not. Then $\Gamma \cdot \vect{0}$ is a subset of a compact set $C \subset \Bb^m$. This implies that $\Gamma$ has a fixed point $z_0 \in \Bb^m$, see~\cite[Theorem 12.2]{Frankel1989}. Since $\Aut(\Bb^m)$ acts transitively on $\Bb^m$, see ~\cite[Corollary 2.2.2]{Abate1989}, and  $\Ksf$ is the stabilizer of $\vect{0}$ in $\Aut(\Bb^m)$,  we then have $\Gamma < h_0 \Ksf h_0^{-1}$ for some $h_{0} \in \Aut(\Bb^m)$. Then $\tilde \Gamma <h \Usf(m+1)h^{-1}$ for some $h \in \GL(m+1,\Cb)$. Hence
$$
\tilde \Gamma < \{ g \in \GL(m+1,\Cb) : (h^{-1} gh)^* h^{-1} g h =\Id\}
$$
and so
$$
P(X) := \norm{ (h^{-1} Xh)^* h^{-1} X h - \Id}^2
$$
is a real polynomial that vanishes on $\tilde \Gamma$, but not on $\Usf(m,1)$. So we have a contradiction. 

(2). Let 
$$
\Uc : = \left\{\begin{pmatrix} A & {b} \\ c^T & d \end{pmatrix} \in M_{m+1}(\Cb) :  c^T z_0 + d \neq 0 \right\}.
$$
Notice that if $g = \begin{pmatrix} A & {b} \\ c^T & d \end{pmatrix} \in \Usf(m,1)$, then $g(z_0,1) \neq 0$ and 
$$
0=[(z_0,1), (z_0,1)]_{m,1} = [g(z_0,1), g(z_0,1)]_{m,1} = \norm{Az_0+b}^2 - \norm{c^T z_0 + d}^2.
$$
Hence $c^T z_0 + d \neq 0$ and so $g \in \Uc$. Now consider the function $R : \Uc \rightarrow \Rb$ given by 
$$
R\left( \begin{pmatrix} A & {b} \\ c^T & d \end{pmatrix}\right): = P\left(\frac{Az_0+b}{c^T z_0 + d} \right).
$$
Then there exist real polynomials $P_1, P_2 : M_{m+1}(\Cb) \rightarrow \Rb$ such that $R(X) =\frac{P_1(X)}{P_2(X)}$ and $P_2(X) \neq 0$ for all $X \in \Uc$. Since $P|_{\Gamma \cdot z_0} \equiv 0$, we have $P_1|_{\tilde \Gamma} \equiv 0$. Hence $P_1|_{\mathsf{U}(m,1)} \equiv 0$. 

Since $\PU(m,1) = \Aut(\Bb^m)$ acts transitively on $\partial \Bb^m$ (in fact the subgroup $\Ksf$ acts transitively on $\partial \Bb^m$), we then have $P|_{\partial \Bb^m} \equiv 0$.

(3). Suppose for a contradiction that $\Gamma \cdot z_0 = \{ z_1,\dots, z_n\}$. Then 
$$
P(z) := \prod_{j=1}^n \norm{ z-z_j}^2
$$
is a real polynomial which vanishes on $\Gamma \cdot z_0$, but not on $\partial \Bb^m$. So we have a contradiction with (2). 

(4). Since $\Lambda(\Gamma)$ is $\Gamma$-invariant, this follows immediately from (3). 
\end{proof}

\subsection{Real analytic sets}

In this section we recall the definition and some properties of real analytic sets. 

Given an open set $\Uc \subset \Rb^d$, a relatively closed subset $Z \subset \Uc$ is a \emph{real analytic subset of $\Uc$} if for every $x \in Z$ there exist an open neighborhood $\Oc$ of $x$ in $\Uc$ and a real analytic function $h : \Oc \rightarrow \Rb$ with 
$$
Z \cap \Oc = h^{-1}(0).
$$

Using the local Noetherian property for real analytic functions, we have the following. 

\begin{theorem}\label{thm:noetherian property} Suppose $\Uc \subset \Rb^d$ is an open set  and $Z \subset \Uc$ is relatively closed in $\Uc$. If for every $x \in Z$ there exist an open neighborhood $\Oc$ of $x$ in $\Uc$ and real analytic functions $\{h_j : \Oc \rightarrow \Rb\}_{j \in \Nb}$ such that 
$$
Z \cap \Oc = \bigcap_{j=1}^\infty h_j^{-1}(0),
$$
then $Z$ is a real analytic subset of $\Uc$. 
\end{theorem}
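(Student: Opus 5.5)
The plan is to reduce to the local Noetherian property of the ring of germs of real analytic functions. Fix $x_0 \in Z$ together with the neighborhood $\Oc$ and the family $\{h_j\}$ from the hypothesis. Let $\Oc_{x_0}$ denote the ring of germs at $x_0$ of real analytic functions on $\Rb^d$. Since $\Oc_{x_0}$ is isomorphic to the ring $\Rb\{x_1,\dots,x_d\}$ of convergent power series, it is Noetherian. Consider the ascending chain of ideals
$$
I_n := (h_1,\dots,h_n)\,\Oc_{x_0}, \quad n \in \Nb.
$$
This chain must stabilize, so there is some $N$ with $I_n = I_N$ for all $n \geq N$. In particular, for each $j > N$ there are germs $a_{j,1},\dots,a_{j,N}$ with
$$
h_j = \sum_{i=1}^N a_{j,i} h_i
$$
as germs at $x_0$.

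The main obstacle is that these identities hold only as germs, and a priori on a neighborhood $\Oc_j$ of $x_0$ that may shrink as $j \to \infty$. To avoid infinitely many shrinkings, I would use a coherence or uniformity statement. Cartan's coherence theorem states that the sheaf of ideals generated by finitely many analytic functions is coherent. A more elementary option is the Noetherian property of the ring of analytic functions on a fixed small closed polydisc $\overline{D}$ about $x_0$. This is Frisch's theorem for real analytic functions on compact semianalytic sets, or one can complexify and use Noetherianity of holomorphic functions on a Stein compact polydisc. I would take the second route: choose a closed polydisc $\overline{D} \subset \Oc$ centered at $x_0$ such that every $h_j$ extends holomorphically to a fixed complex neighborhood. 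That fixed neighborhood must not shrink with $j$, and securing it is exactly the delicate point. If it is not available, I would instead apply the stabilization argument to the ideal sheaf on the sets $\overline{D}$. By Noetherianity of $\Oc(\overline{D})$, the ideal generated by all the $h_j$ is generated by $h_1,\dots,h_N$ for some $N$. Hence on $D$ each $h_j$ is a combination of $h_1,\dots,h_N$ with coefficients analytic on $D$.

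Granting this, on the open set $D$ we obtain
$$
Z \cap D = \bigcap_{j=1}^\infty h_j^{-1}(0) \cap D = \bigcap_{j=1}^N h_j^{-1}(0) \cap D.
$$
The inclusion $\subseteq$ is trivial. For $\supseteq$, note that at any point where $h_1,\dots,h_N$ vanish, every $h_j$ vanishes because it is an analytic combination of them. Finally, set $h := \sum_{j=1}^N h_j^2$, which is real analytic on $D$ and satisfies $h^{-1}(0) = Z \cap D$. Since $x_0 \in Z$ was arbitrary and $Z$ is relatively closed in $\Uc$ by hypothesis, $Z$ is a real analytic subset of $\Uc$.
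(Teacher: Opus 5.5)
Your argument is correct. Its skeleton matches the paper's: shrink to a neighborhood whose closure lies in $\Oc$, find finitely many $h_j$ with the same common zero set there, then take $\sum h_j^2$. The difference is the black box.

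\begin{itemize}
\item The paper cites Narasimhan (Chapter V, Corollary 1). That result says directly that on an open $\Oc'$ with $\overline{\Oc'} \subset \Oc$, the common zero set of any family of real analytic functions equals that of finitely many of them.
\item You instead derive this from Noetherianity of the ring of germs along a compact polydisc (Frisch's theorem). This gives a stronger conclusion, namely $h_j \in (h_1,\dots,h_N)$ on $D$ for every $j$. The price is citing a theorem at least as deep as the paper's.
\end{itemize}

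When you write it up, drop the hedging.
\begin{itemize}
\item The germ-level chain at $x_0$ is superfluous; as you note, it gives no uniformity.
\item No fixed complex neighborhood is needed, and in general none exists. For example, the functions $h_j(x) = x/(1+j^2x^2)$ on $\Rb$ have holomorphic extensions with poles at $\pm i/j$.
\end{itemize}

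The clean formulation is this. Let $\mathcal{O}(\overline{D})$ be the ring of real analytic germs along the real compact set $\overline{D} \subset \Rb^d$, i.e. the direct limit over its open neighborhoods. Each $h_j$ defines an element of this ring because $\overline{D} \subset \Oc$. An identity $h_j = \sum_i a_{j,i} h_i$ in this ring holds on a neighborhood of $\overline{D}$ that may depend on $j$ but always contains $\overline{D}$. That is exactly what removes the uniformity problem you flagged.

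Frisch's theorem applies because $\overline{D}$, viewed in $\Cb^d$, is a semianalytic Stein compact: compact subsets of $\Rb^d$ are polynomially convex. Noetherianity then passes from complex germs to real germs via the conjugation $f \mapsto \overline{f(\bar z)}$.
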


\begin{proof} Fix $x \in Z$, an open neighborhood $\Oc$ of $x$ in $\Uc$, and real analytic functions $\{h_j : \Oc \rightarrow \Rb\}_{j \in \Nb}$ such that 
$$
Z \cap \Oc = \bigcap_{j=1}^\infty h_j^{-1}(0).
$$
Next fix an open neighborhood $\Oc' \subset \Oc$ of $x$ with $\overline{\Oc'} \subset \Oc$. Then by~\cite[Chapter V, Corollary 1]{MR217337}, there exists a finite subset $J \subset \Nb$ with 
$$
Z \cap \Oc' = \Oc' \cap \bigcap_{j \in J} h_j^{-1}(0).
$$
Hence 
$$
Z \cap \Oc' = f^{-1}(0)
$$
where $f = \sum_{j \in J} h_j^2|_{\Oc'}$. Since $x \in Z$ was arbitrary, $Z$ is a real analytic subset of $\Uc$.
\end{proof}

We also have the following basic fact. 

\begin{proposition}\label{prop:ra set contains open set implies it is everything} Suppose $\Uc \subset \Rb^d$ is an open connected set and $Z \subset \Uc$ is real analytic. If $Z$ has non-empty interior, then $Z = \Uc$.
\end{proposition}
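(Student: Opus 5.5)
The plan is a connectedness argument based on the identity theorem for real analytic functions. Set
$$
W := \{ x \in \Uc : Z \text{ contains an open neighborhood of } x\},
$$
the interior of $Z$ in $\Uc$. By hypothesis $W \neq \emptyset$, and $W$ is open by definition. We will show that $W$ is relatively closed in $\Uc$. Connectedness of $\Uc$ then gives $W = \Uc$, and hence $Z = \Uc$.

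For closedness, take $x \in \Uc$ in the closure of $W$. We first check that $x \in Z$. Since $Z$ is relatively closed in $\Uc$ and $W \subset Z$, the point $x$ lies in $\overline{W} \cap \Uc \subset Z$.

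Next, apply the definition of a real analytic subset at $x$. This gives an open neighborhood $\Oc$ of $x$ in $\Uc$ and a real analytic function $h : \Oc \rightarrow \Rb$ with $Z \cap \Oc = h^{-1}(0)$. Shrink $\Oc$ to a connected open ball $B$ around $x$. Since $x \in \overline{W}$, the ball $B$ meets the open set $W$, so $B \cap W$ is a nonempty open set. On $B\cap W$ we have $h \equiv 0$, because $W \subset Z$. By the identity theorem for real analytic functions on the connected set $B$, $h \equiv 0$ on all of $B$. Therefore $B \subset h^{-1}(0) = Z \cap \Oc \subset Z$, which means $x \in W$. So $W$ is closed in $\Uc$, and the argument is complete.

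The only substantive input is the identity theorem: a real analytic function on a connected open set that vanishes on a nonempty open subset vanishes identically. If one wants a justification, note that the set of points where all derivatives of $h$ vanish is closed by continuity and open by convergence of the power series, so it is everything. The remaining care point is to work with a connected ball inside $\Oc$, since $\Oc$ itself need not be connected. Neither step is a serious obstacle.
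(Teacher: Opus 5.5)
Your proof is correct and follows essentially the same argument as the paper: show that the interior of $Z$ is relatively closed in the connected set $\Uc$, using the local defining function $h$ at a limit point. The only difference is packaging. The paper passes the vanishing of all derivatives of $h$ from points $x_n \to x$ in the interior to $x$ itself, then uses the power series at $x$. You invoke the identity theorem on a small connected ball around $x$.
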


\begin{proof} Let $I$ denote the interior of $Z$. Since $\Uc$ is connected, it suffices to show that $I$ is closed in $\Uc$. 

Suppose $x$ is in the closure of $I$ in $\Uc$. Then $x \in Z$. So there exist an open neighborhood $\Oc$ of $x$ and a real analytic function $h : \Oc \rightarrow \Rb$ with 
$$
Z \cap \Oc = h^{-1}(0).
$$
If $x_n \rightarrow x$ where $x_n \in I \cap \Oc$, then the above implies that $h$ vanishes in a neighborhood of each $x_n$. Hence every partial derivative of $h$ vanishes at each $x_n$. Then, every partial derivative of $h$ vanishes at $x$ and so $h$ vanishes in a neighborhood of $x$. Hence $x \in I$.
\end{proof}


\section{Asymptotic behavior of automorphisms}

Given $g \in \Aut(\Bb^m) = \PU(m,1)$, let $\tilde g \in \Usf(m,1)$ denote some lift of $g$. Then let $\lambda_1(g)$ denote the largest absolute value of the eigenvalues of $\tilde g$, and let $\sigma_1(g)$ denote the largest singular value of  $\tilde g$. Notice that both of these quantities are independent of the choice of lift. Further, it follows from the spectral radius formula that 
\begin{equation}\label{eqn:spectral radius formula}
\lambda_1(g) =\lim_{n \rightarrow \infty} \sigma_1(g^n)^{1/n}.
\end{equation} 

We start by observing the following equivalence. 

\begin{theorem}\label{thm:lambda1 char loxo} If $g \in \Aut(\Bb^m)$, then $g$ is loxodromic if and only if $\lambda_1(g) > 1$. \end{theorem}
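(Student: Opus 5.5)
The plan is to prove both directions using the trichotomy elliptic/parabolic/loxodromic together with Theorem~\ref{thm:structure of loxodromic} and the spectral radius formula~\eqref{eqn:spectral radius formula}.

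\emph{Loxodromic implies $\lambda_1(g)>1$.} By Theorem~\ref{thm:structure of loxodromic}, write $g = hka_th^{-1}$ with $k\in\Msf$ and $t>0$. Eigenvalue moduli are conjugation invariant, so it suffices to compute $\lambda_1(ka_t)$. Since $k$ commutes with $a_t$ and acts only on the middle block by a unitary matrix, the matrix $ka_t$ is block diagonal. One block is the $2\times2$ block $\begin{bmatrix}\cosh t & \sinh t\\ \sinh t&\cosh t\end{bmatrix}$, with eigenvalues $e^{\pm t}$; the other is the unitary block $U$, with eigenvalues of modulus $1$. Hence $\lambda_1(g)=e^t>1$.

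\emph{$\lambda_1(g)>1$ implies loxodromic.} We argue by contrapositive, showing that elliptic and parabolic elements have $\lambda_1=1$.

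If $g$ is elliptic, then by transitivity of $\Aut(\Bb^m)$ on $\Bb^m$ we have $g=h k h^{-1}$ with $k\in\Ksf$. Since $k$ is a unitary matrix, $\lambda_1(g)=\lambda_1(k)=1$.

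If $g$ is parabolic, I would use the dynamics. Suppose $\lambda_1(g)>1$. Let $\tilde g\in\Usf(m,1)$ be a lift and $v$ an eigenvector with $\tilde g v=\mu v$, $|\mu|>1$. From $[\tilde g v,\tilde g v]_{m,1}=|\mu|^2[v,v]_{m,1}$ we get $[v,v]_{m,1}=0$, so $v$ is null. Moreover $\mu\ne 0$, so the last coordinate of $v$ is nonzero (a null vector with zero last coordinate would vanish). Thus $[v]$ defines a point $x\in\partial\Bb^m$ fixed by $g$. Since $\det\tilde g$ has modulus $1$, there must also be an eigenvalue of modulus $<1$, namely $1/\bar\mu$: the eigenvalues of an element of $\Usf(m,1)$ are closed under $\mu\mapsto 1/\bar\mu$, because $\tilde g^{*}J\tilde g=J$ makes $\tilde g^{-*}$ conjugate to $\tilde g$. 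The corresponding eigenvector is also null by the same computation, and it is distinct from $v$ because $|\mu|\neq 1/|\bar\mu|$. This gives a second boundary fixed point. A non-elliptic element with two boundary fixed points is loxodromic, contradicting parabolicity.

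Alternatively, one could use~\eqref{eqn:spectral radius formula} together with the $\mathsf{KAK}$ decomposition: $\sigma_1(g^n)=e^{t_n}$ where $g^n(\vect 0)=k a_{t_n}(\vect 0)$, so $\lambda_1(g)>1$ if and only if $d(\vect 0,g^n\vect 0)$ grows linearly. However, the eigenvector argument avoids needing the sublinear displacement of parabolics, so I will use it.

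The main obstacle is verifying cleanly the eigenvalue symmetry $\mu\leftrightarrow 1/\bar\mu$ and that the two null eigenvectors give distinct boundary points. Both follow from $\tilde g^*J\tilde g=J$ with $J=\mathrm{diag}(\Id_m,-1)$: this identity gives $\tilde g^{-1}=J^{-1}\tilde g^*J$, so the spectrum of $\tilde g^{-1}$ is the complex conjugate of the spectrum of $\tilde g$, which is exactly the needed symmetry.
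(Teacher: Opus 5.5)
Your proof is correct. The forward direction and the elliptic case are the same as the paper's; the only difference is the parabolic case.

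\textbf{The paper's route.} It normalizes $g$ so that $g(-e_1)=-e_1$. It then quotes a normal form from Abate: a parabolic element is conjugate in $\mathsf{PGL}(m+1,\Cb)$ to an upper triangular matrix with $1$'s on the diagonal. From this it reads off $\lambda_1(g)=1$.

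\textbf{Your route.} You use only the relation $\tilde g^{*}J\tilde g=J$. An eigenvalue of modulus $\neq 1$ has a null eigenvector. Its last coordinate must be nonzero, so it gives a boundary fixed point. Since $\abs{\det\tilde g}=1$, there are eigenvalues of modulus $>1$ and of modulus $<1$. Eigenvectors for these distinct eigenvalues are linearly independent, so they give two distinct boundary fixed points. This contradicts the definition of parabolic as having exactly one.

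\textbf{What each buys.} The paper's argument is shorter but rests on an external classification. Yours is self-contained and treats every parabolic uniformly. In particular it covers ellipto-parabolic elements, for example a nontrivial unipotent element of the $\Usf(1,1)$ acting on the $e_1,e_{m+1}$ coordinates composed with a nontrivial element of $\Msf$. Their eigenvalues are unimodular but not all equal, so the normal form as the paper literally quotes it does not apply to them. The conclusion $\lambda_1=1$ still holds for them, of course.

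\textbf{A simplification.} The determinant remark alone already supplies an eigenvalue of modulus $<1$. So the $\mu\mapsto 1/\bar\mu$ symmetry you single out as the main obstacle is not needed.
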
 

\begin{proof} Notice that if $k \in \Msf$ and $t > 0$, then $\lambda_1(ka_t) = e^t$. So the forward direction follows from Theorem~\ref{thm:structure of loxodromic}. 

For the backward direction, it suffices to show that $\lambda_1(g) =1$ when $g$ is elliptic or parabolic. 

First suppose that $g$ is elliptic. Since $\Aut(\Bb^m)$ acts transitively on $\Bb^m$, see ~\cite[Corollary 2.2.2]{Abate1989}, we can conjugate $g$ so that $g(\vect{0}) =\vect{0}$. Since $\Ksf$ is the stabilizer of $\vect{0}$ in $\Aut(\Bb^m)$,  we have $g \in \Ksf$ and $\lambda_1(g) =1$. 

Next suppose that $g$ is parabolic. Since $\Aut(\Bb^m)$ acts transitively on $\partial \Bb^m$, we can assume that $g(-e_1)=-e_1$. Then ~\cite[Proposition 2.2.10]{Abate1989} implies that $g$ is conjugate in $\mathsf{PGL}(m+1, \Cb)$ to an upper triangular matrix with $1$'s on the diagonal. Hence $\lambda_1(g) =1$. 
\end{proof} 

The next two results relate $\lambda_1(g)$ to the dynamical behavior of the powers $g^n$. Recall from Theorem~\ref{thm:structure of loxodromic} that if $g \in \Aut(\Bb^m)$ is loxodromic, then we can label the fixed points of $g$ as $x_g^+, x_g^-$ so that 
$$
\lim_{n \rightarrow \pm \infty} g^{n}(z)= x_g^{\pm}
$$
for all $z \in \overline{\Bb^m} \smallsetminus \{x_g^{\mp} \}$. The next result computes the rate of convergence. 

\begin{theorem}\label{thm: contraction rates of loxodromics}
    Suppose $g \in \Aut(\Bb^m)$ is loxodromic and $L_g \subset \mathbb{C}^m$ is the complex affine line containing $x^\pm_g$.
    If $z \in \overline{\Bb^m} \smallsetminus  \{x^+_g, x^-_g\}$, then 
$$
\lim_{n \rightarrow \infty} \frac{1}{n} \log \|g^n(z) - x^+_g\| = 
\begin{cases} 
-2\log \lambda_1(g) & \text{if } z \in L_g \\
-\log \lambda_1(g) & \text{if } z \notin L_g 
\end{cases}. $$
\end{theorem}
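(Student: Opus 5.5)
By Theorem~\ref{thm:structure of loxodromic}, we may write $g = h k a_t h^{-1}$ with $h \in \Aut(\Bb^m)$, $k \in \Msf$, $t>0$. Here $x_g^\pm = h(\pm e_1)$ and $\lambda_1(g) = \lambda_1(ka_t) = e^t$, since $\lambda_1$ is conjugation invariant. The plan is to compute the rates for $ka_t$ explicitly and then transfer them back along $h$.

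First we treat the model case $g_0 = ka_t$ with $k = \mathrm{diag}(1,U,1)$. Since $k$ commutes with $a_t$, we have $g_0^n = k^n a_{nt}$. For $w=(w_1,w')\in\overline{\Bb^m}\smallsetminus\{\pm e_1\}$, formula~\eqref{eqn:action of at} gives
$$
g_0^n(w) - e_1 = \frac{\bigl(\cosh(nt)w_1+\sinh(nt) - \sinh(nt)w_1 - \cosh(nt),\; U^n w'\bigr)}{\sinh(nt)w_1+\cosh(nt)} = \frac{\bigl(-(1-w_1)e^{-nt},\; U^n w'\bigr)}{\tfrac12 e^{nt}(1+w_1) + \tfrac12 e^{-nt}(1-w_1)}.
$$
We need $w_1 \neq -1$. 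This holds because $w\in\overline{\Bb^m}$ with $w_1=-1$ forces $w=-e_1$. Hence the denominator is $\asymp e^{nt}$. Since $U^n$ is unitary, $\|U^n w'\| = \|w'\|$. So if $w' \neq 0$, the norm is $\asymp e^{-nt}$ and the rate is $-t$. If $w' = 0$, the point lies on $L_{g_0} = \Cb e_1$, and since $w_1\neq 1$ the norm is $\asymp e^{-2nt}$, giving rate $-2t$.

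Next we transfer to general $g$ through $h$. We have $g^n(z) - x_g^+ = h(g_0^n(w)) - h(e_1)$ with $w = h^{-1}(z)$, and $w \neq \pm e_1$ because $z\neq x_g^\pm$. The map $h$ is a fractional linear transformation whose denominator does not vanish on $\overline{\Bb^m}$, by the computation in the proof of Proposition~\ref{prop:properties Z dense}(2), applied to boundary points and by continuity to a neighborhood. Consequently $h$ is smooth, with smooth inverse, on a neighborhood of $\overline{\Bb^m}$. In particular it is bi-Lipschitz on a neighborhood of $e_1$. Since $g_0^n(w) \to e_1$, we get $\|g^n(z) - x_g^+\| \asymp \|g_0^n(w) - e_1\|$ with constants independent of $n$. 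Therefore the $\frac1n\log$ limits coincide.

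It remains to match the lines, namely to show that $z \in L_g$ if and only if $w \in L_{g_0} = \Cb e_1$. Fractional linear maps send complex affine lines to complex affine lines wherever they are defined; this is the easy direction discussed before Theorem~\ref{thm:FTAG}. So $h$ maps $\Cb e_1 \cap \mathcal{V}$ into the line through $h(\pm e_1) = x_g^\pm$, which is $L_g$, where $\mathcal{V}$ is the domain of $h$. Applying the same reasoning to $h^{-1}$ gives the converse on $\overline{\Bb^m}$. Combining this with $\lambda_1(g) = e^t$ yields the stated dichotomy.

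The main obstacle is this transfer step: we must check carefully that the denominators of $h$ and $h^{-1}$ are nonvanishing on a neighborhood of $\overline{\Bb^m}$, so that both the bi-Lipschitz comparison and the line correspondence hold uniformly up to the boundary. The model computation itself is routine.
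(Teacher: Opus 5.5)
Your proposal is correct and follows essentially the same route as the paper: conjugate $g$ to the model $ka_t$ via $h$, use that $h$ is bi-Lipschitz up to $\overline{\Bb^m}$ and carries $\Cb\cdot e_1$ into $L_g$ (with $h^{-1}$ giving the converse), and then compute the rates $-2t$ and $-t$ explicitly, using $\lambda_1(ka_t)=e^t$. Your version adds a few details the paper only asserts, namely the check that $w_1\neq -1$ and the justification of the line correspondence, but the argument is the same.
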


\begin{proof} Recall from Theorem~\ref{thm:structure of loxodromic} that $g = ha_tkh^{-1}$ for some $h \in \mathsf{Aut}(\Bb^m)$, $k \in \mathsf{M}$, and $t > 0.$ Then $x^\pm_g = h(\pm e_1)$. Since $h$ acts smoothly on $\overline{\Bb^m}$, there exists some $C > 1$ such that
\begin{equation*}
    C^{-1}\|z_1 - z_2\|  \leq \|h(z_1) - h(z_2)\| \leq C\|z_1 - z_2\| 
\end{equation*}
for all $z_1,z_2 \in \overline{\Bb^m}$.
Thus 
\begin{align*}
\lim_{n \rightarrow \infty} \frac{1}{n} \log \|g^n(z) - x^+_g\| & =\lim_{n \rightarrow \infty} \frac{1}{n} \log \|h(a_tk)^nh^{-1}(z) - h(e_1)\|\\
& =\lim_{n \rightarrow \infty} \frac{1}{n} \log \|(a_{t}k)^n(h^{-1}(z)) - e_1\|.
\end{align*}
Since $h^{-1}$ maps $L_g$ to $\Cb \cdot e_1=L_{a_t k}$ and $\lambda_1(g) =\lambda_1(a_t k)$, it suffices to consider the case where $g = a_t k$ for some $t > 0$ and $k \in \mathsf{M}$. We further note that 
$$
\lambda_1(a_t k) = e^t .
$$
Fix $z \in \overline{\Bb^m} \smallsetminus \{e_{1}, -e_{1}\}$. 

\medskip

\underline{Case 1:} Assume $z \in \Cb \cdot e_1$. Then $z = ce_1$ where $c \in \overline{\Db}\smallsetminus\{1, -1\}$ (here $\Db$ denotes the unit disc in $\Cb$). Notice that the definition of $\Msf$ implies $k(z)=z$. So    \begin{align*}
        \lim_{n \to \infty} \frac{1}{n} \log \|g^n(z) - e_1\| &= \lim_{n \to \infty} \frac{1}{n} \log \left\|a_{nt}(ce_1) - e_1\right\| \\
        &= \lim_{n \to \infty} \frac{1}{n} \log \left\|\frac{c\cosh nt + \sinh nt}{c\sinh nt + \cosh nt}e_1 - e_1\right\| \\
        & = \lim_{n \to \infty} \frac{1}{n} \log \frac{ \abs{(c-1)(\cosh nt - \sinh nt)}}{\abs{c\sinh nt + \cosh nt}} \\
        &= -2t,
    \end{align*}
where the last equality uses the fact that $c \neq \pm 1$. 

\medskip

\underline{Case 2:} Assume $z \notin \Cb \cdot e_1$. Then $z = (c,z')$ for some $c \in \Db$ and $z' \in \mathbb{C}^{m - 1} \smallsetminus \{0\}.$ Notice that the definition of $\Msf$ implies $k^n(z)=(c,U^nz')$ for some $U \in \Usf(m-1)$. So 
 \begin{align*}
        \lim_{n \to \infty} \frac{1}{n} \log \|g^nz - e_1\| &= \lim_{n \to \infty} \frac{1}{n} \log \left\|a_{nt}(ce_1 + U^nz') - e_1\right\| \\
        &= \lim_{n \to \infty} \frac{1}{n} \log \left\|\frac{(c\cosh nt + \sinh nt)e_1 + U^nz'}{c\sinh nt + \cosh nt} - e_1\right\| \\
        &= \lim_{n \to \infty} \frac{1}{n} \log \frac{ \sqrt{  \abs{(c-1)(\cosh nt - \sinh nt)}^2 + \norm{ U^n z'}^2} }{\abs{c\sinh nt + \cosh nt}} \\
        & = -t, 
    \end{align*}
    where the last equality uses the fact that $\abs{c} < 1$ and $\norm{ U^n z'} = \norm{z'} < 1$. 
\end{proof}

\begin{proposition}\label{prop:approaching the boundary}     If $g \in \mathsf{Aut}(\mathbb{B}^m),$ then
    $$ \|g(\vect{0})\| = \frac{\sigma_1(g)^2 - 1}{\sigma_1(g)^2 + 1}. $$
    Hence, 
    $$ \lim_{n \to \infty} \frac{1}{n}\log(1 - \|g^n(\vect{0})\|) = -2\log\left(\lambda_1(g)\right). $$
\end{proposition}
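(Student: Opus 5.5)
We reduce the first formula to the case $g = a_t$ using the $\mathsf{KAK}$-decomposition (Theorem~\ref{thm:KAK}). Write $g = k_1 a_t k_2$ with $k_1,k_2 \in \Ksf$ and $t \geq 0$. Then $g(\vect{0}) = k_1 a_t(\vect{0})$, and since $\Ksf$ preserves the $\ell^2$-norm, $\norm{g(\vect{0})} = \norm{a_t(\vect{0})} = \tanh t$. Next we compute $\sigma_1(g)$. Choose lifts of $k_1, k_2$ of the form $\operatorname{diag}(U,1)$ with $U \in \Usf(m)$. These lifts are unitary matrices, so they do not change singular values, and $\sigma_1(g) = \sigma_1(a_t)$. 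The matrix $a_t$ is real symmetric. On the span of $e_1, e_{m+1}$ it has eigenvalues $\cosh t \pm \sinh t = e^{\pm t}$, and it is the identity elsewhere, so $\sigma_1(a_t) = e^t$. Then
$$
\frac{\sigma_1(g)^2-1}{\sigma_1(g)^2+1} = \frac{e^{2t}-1}{e^{2t}+1} = \tanh t = \norm{g(\vect{0})},
$$
which proves the first formula. We should note that $\sigma_1$ is independent of the lift, since lifts in $\Usf(m,1)$ differ by unit scalars.

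For the limit, apply the formula to $g^n$:
$$
1 - \norm{g^n(\vect{0})} = \frac{2}{\sigma_1(g^n)^2 + 1}.
$$
Hence
$$
\frac{1}{n}\log\left(1-\norm{g^n(\vect{0})}\right) = \frac{\log 2}{n} - \frac{1}{n}\log\left(\sigma_1(g^n)^2+1\right).
$$
Since $\sigma_1(g^n) \geq 1$ (see below), we have $\sigma_1(g^n)^2 \leq \sigma_1(g^n)^2+1 \leq 2\sigma_1(g^n)^2$. So the last term equals $-\frac{2}{n}\log\sigma_1(g^n) + O(1/n)$. By the spectral radius formula \eqref{eqn:spectral radius formula}, $\frac{1}{n}\log \sigma_1(g^n) \to \log \lambda_1(g)$, which gives the limit $-2\log\lambda_1(g)$.

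The only point needing care is the bound $\sigma_1(g^n) \geq 1$. It follows from the first part: $\sigma_1(g^n) = e^{t_n}$ with $t_n \geq 0$ by the $\mathsf{KAK}$ computation applied to $g^n$. It also follows more directly from $\abs{\det \tilde g} = 1$. Beyond this, the argument is routine; the main step is recognizing that the $\Ksf$-factors are genuinely unitary matrices, so they preserve both the norm of the orbit point and the singular values.
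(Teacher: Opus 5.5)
Your proposal is correct and follows the paper's proof: the $\mathsf{KAK}$ reduction to $a_t$ gives the first formula, and then $1-\norm{g^n(\vect{0})} = 2/(\sigma_1(g^n)^2+1)$ combined with the spectral radius formula gives the limit. Your remark that $\sigma_1(g^n)\geq 1$ makes explicit the comparison between $\log(\sigma_1(g^n)^2+1)$ and $2\log\sigma_1(g^n)$, a step the paper passes over without comment.
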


\begin{proof}     Using the $\mathsf{KAK}$-decomposition, see Theorem~\ref{thm:KAK}, we can write $g = k_1a_tk_2$ where $k_1,k_2 \in \Ksf$ and $t \geq 0.$ Then 
    $$
     \|g(\vect{0})\| = \|a_t(\vect{0})\| = \| (\tanh t, 0, \dots, 0)\|=\tanh t = \frac{e^t - e^{-t}}{e^t + e^{-t}} = \frac{e^{2t}-1}{e^{2t}+1}. 
    $$
    Since $\sigma_1(g) =\sigma_1(a_t) = e^t$, the first assertion follows. Now by Equation~\eqref{eqn:spectral radius formula},
    \begin{equation*}
        \begin{split}
    \lim_{n \to \infty} \frac{1}{n}\log(1 - \|g^n(\vect{0})\|) &= \lim_{n \to \infty} \frac{1}{n}\log\left(\frac{2}{\sigma_1(g^n)^2 + 1}\right) \\
    &= \lim_{n \to \infty} -\log\left(\sigma_1(g^n)^\frac{2}{n}\right) \\
    &= -2\log\left(\lambda_1(g)\right)
        \end{split}
    \end{equation*}
     which concludes the proof.
\end{proof}



\section{The Image of Lines Through Fixed Points}

Given a loxodromic element $g \in \Aut(\Bb^m)$, let $x_g^+, x_g^-$ be the labeling of the fixed points as in Theorem~\ref{thm:structure of loxodromic}, and let $L_g$ denote the complex affine line containing these fixed points. In this section, we show that if a proper holomorphic map extends to a H\"older continuous map on the boundary with sufficiently large H\"older exponent, then the complex affine lines associated to loxodromic elements in $\Aut(\Bb^m)$ are mapped into complex affine lines.

\begin{theorem}\label{thm:loxo lines are mapped to loxo lines} Suppose $f : \Bb^m\rightarrow \Bb^M$ is a proper holomorphic map that extends to an $\alpha$-H\"older continuous map $\overline{\Bb^m} \rightarrow \overline{\Bb^M}$ for some $\alpha > 1/2$.  If $(\phi,\psi) \in \Gsf_f$ and $\phi$ is loxodromic, then $\psi$ is loxodromic. Moreover, 
$$
f\left(L_{\phi} \cap \overline{\Bb^m}\right) \subset L_{\psi} \cap \overline{\Bb^M}.
$$
\end{theorem}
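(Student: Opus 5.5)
The plan is to exploit the equivariance $f\circ\phi^n=\psi^n\circ f$ on $\overline{\Bb^m}$ (valid by continuity of the extension) and to compare exponential rates on both sides. The tools are Proposition~\ref{prop:approaching the boundary} and Theorem~\ref{thm: contraction rates of loxodromics}, combined with two boundary-distance estimates for $f$.

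\emph{Step 1: $\psi$ is loxodromic, with $\lambda_1(\phi)^{\alpha}\le\lambda_1(\psi)\le\lambda_1(\phi)$.} I will use the two comparisons
$$
c\,(1-\norm{z})\le 1-\norm{f(z)}\le C\,(1-\norm{z})^{\alpha}, \qquad z\in\Bb^m .
$$
The lower bound is a Hopf lemma argument. The function $\log\norm{f}$ is plurisubharmonic, negative, and tends to $0$ at $\partial\Bb^m$ by properness, so comparing it on complex discs with a multiple of $\log\norm{z}$ near the sphere gives the bound. The upper bound follows from H\"older continuity: $\norm{f(z)-f(z/\norm{z})}\le C'(1-\norm{z})^\alpha$ and $\norm{f(z/\norm{z})}=1$.

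Apply these to $z=\phi^n(\vect 0)$ and use $f(\phi^n(\vect 0))=\psi^n(w_0)$ with $w_0=f(\vect 0)$. Conjugating by an automorphism $h$ with $h(\vect 0)=w_0$ changes $1-\norm{\cdot}$ only by a bounded factor, so Proposition~\ref{prop:approaching the boundary} still gives $\frac1n\log(1-\norm{\psi^n(w_0)})\to-2\log\lambda_1(\psi)$. Comparing with $-2\log\lambda_1(\phi)$ yields the displayed inequalities. Since $\lambda_1(\phi)>1$ by Theorem~\ref{thm:lambda1 char loxo}, we get $\lambda_1(\psi)>1$, so $\psi$ is loxodromic, again by Theorem~\ref{thm:lambda1 char loxo}.

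\emph{Step 2: $f(x^\pm_\phi)=x^\pm_\psi$.} Since $\phi^n(\vect 0)\to x^+_\phi$, continuity gives $\psi^n(w_0)\to f(x^+_\phi)$. On the other hand $w_0\in\Bb^M$, so Theorem~\ref{thm:structure of loxodromic} gives $\psi^n(w_0)\to x^+_\psi$. Hence $f(x^+_\phi)=x^+_\psi$, and using $n\to-\infty$ gives $f(x^-_\phi)=x^-_\psi$. This shows both fixed points of $\phi$ land in $L_\psi$.

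\emph{Step 3: the line.} Fix $z\in L_\phi\cap\overline{\Bb^m}$ other than $x^\pm_\phi$, and set $w=f(z)$. If $w\in\{x^+_\psi,x^-_\psi\}$ there is nothing to prove, so assume otherwise. By Theorem~\ref{thm: contraction rates of loxodromics}, $\norm{\phi^n(z)-x^+_\phi}=e^{-2n\log\lambda_1(\phi)+o(n)}$. H\"older continuity together with Step 2 then gives
$$
\norm{\psi^n(w)-x^+_\psi}=\norm{f(\phi^n z)-f(x^+_\phi)}\le C e^{-2\alpha n\log\lambda_1(\phi)+o(n)}.
$$
If $w\notin L_\psi$, Theorem~\ref{thm: contraction rates of loxodromics} (which applies on all of $\overline{\Bb^M}$) says the left side decays exactly at rate $-\log\lambda_1(\psi)$. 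This forces $2\alpha\log\lambda_1(\phi)\le\log\lambda_1(\psi)\le\log\lambda_1(\phi)$, contradicting $\alpha>1/2$. So $w\in L_\psi$, and since $f$ maps $\overline{\Bb^m}$ into $\overline{\Bb^M}$, we get $f(L_\phi\cap\overline{\Bb^m})\subset L_\psi\cap\overline{\Bb^M}$.

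The step I expect to need the most care is the Hopf-type lower bound in Step 1, since it is what gives the comparison $\lambda_1(\psi)\le\lambda_1(\phi)$. The rest is bookkeeping of exponential rates.
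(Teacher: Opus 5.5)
Your argument is correct and follows the same structure as the paper's proof. The H\"older bound $1-\norm{f(z)}\le C(1-\norm{z})^{\alpha}$ gives $\lambda_1(\psi)\ge\lambda_1(\phi)^{\alpha}>1$, the fixed points of $\phi$ go to those of $\psi$, and the rate gap ($2\alpha$ versus $1$) from Theorem~\ref{thm: contraction rates of loxodromics} contradicts $\lambda_1(\psi)\le\lambda_1(\phi)$. The only difference is how you get that last inequality: the paper first normalizes $f(\vect 0)=\vect 0$ (replacing $f$ by $gf$ and $\psi$ by $g\psi g^{-1}$) and then uses the Schwarz lemma $\norm{f(z)}\le\norm{z}$, which is more elementary than your Hopf-lemma lower bound, while your version avoids the normalization at the cost of the conjugation bookkeeping for $\psi^n(w_0)$.
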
 

The rest of the section is devoted to the proof of the theorem. Notice that replacing $f$ by $g f$ for some $g \in \Aut(\Bb^M)$ does not change the H\"older regularity of the extension (since $g$ acts smoothly on $\overline{\Bb^M}$). Further,  $(\phi,\psi) \in \Gsf_f$ if and only if $(\phi, g \psi g^{-1}) \in \Gsf_{gf}$. So by picking $g$ appropriately, we can assume that 
$$
f(\vect{0})=\vect{0}.
$$ 

Next fix $C > 1$ such that 
$$
\norm{f(z_1)-f(z_2)} \leq C\norm{z_1-z_2}^\alpha
$$
for all $z_1,z_2 \in \overline{\Bb^m}$. 

\begin{lemma}\label{lem:bound with one}
If $z \in \Bb^m$, then 
$$
1-\norm{f(z)} \leq C(1-\norm{z})^{\alpha}.
$$
\end{lemma}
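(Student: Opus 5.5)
Since $f$ is proper, its continuous extension maps $\partial \Bb^m$ into $\partial \Bb^M$. Indeed, if $z_n \rightarrow \xi \in \partial \Bb^m$ with $z_n \in \Bb^m$, then $f(z_n)$ must leave every compact subset of $\Bb^M$, so $\norm{f(z_n)} \rightarrow 1$. Continuity of the extension then gives $\norm{f(\xi)} = 1$.

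The plan is to compare $z$ with its radial projection to the sphere. If $z = \vect{0}$, the inequality reads $1 - \norm{f(\vect{0})} = 1 \leq C$, which holds since $C > 1$ and $f(\vect{0}) = \vect{0}$. For $z \neq \vect{0}$, set $\xi := z/\norm{z} \in \partial \Bb^m$. Then $\norm{z - \xi} = 1 - \norm{z}$, and by the previous paragraph $\norm{f(\xi)} = 1$.

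The triangle inequality and the H\"older bound then give
$$
1 - \norm{f(z)} = \norm{f(\xi)} - \norm{f(z)} \leq \norm{f(\xi) - f(z)} \leq C \norm{\xi - z}^\alpha = C(1-\norm{z})^\alpha ,
$$
which is the claim.

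The only step needing care is the assertion that the extension sends $\partial\Bb^m$ into $\partial \Bb^M$. It rests on properness together with continuity of the extension on $\overline{\Bb^m}$, and it is essentially immediate. Note also that the normalization $f(\vect{0}) = \vect{0}$ is not needed except to handle the trivial case $z = \vect{0}$, and even there any value of $f(\vect{0})$ works because $C > 1$.
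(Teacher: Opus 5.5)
Your proof is correct and follows the paper's argument: compare $z$ with $z/\norm{z}$, use properness to get $\norm{f(z/\norm{z})}=1$, and apply the triangle inequality and the H\"older bound, with the case $z=\vect{0}$ handled by $C>1$. Your justification that the extension maps $\partial\Bb^m$ into $\partial\Bb^M$ fills in a step the paper states without comment.
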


\begin{proof}
Since $C > 1$, this holds for $z=\vect{0}$. Suppose that $z \neq \vect{0}$. Then, since $f$ is a proper holomorphic map, $\norm{f(z/\norm{z})} = 1$. So 
\begin{equation*}
1 - \norm{f(z)} \leq \norm{f \left( \frac{z}{\norm{z}} \right) - f(z)} \leq C \norm{\frac{z}{\norm{z}} - z}^\alpha =C (1 - \norm{z})^\alpha. \qedhere
\end{equation*} 
\end{proof}

\begin{proof}[Proof of Theorem~\ref{thm:loxo lines are mapped to loxo lines}] Suppose $(\phi,\psi) \in \Gsf_f$ and $\phi$ is loxodromic.

We first show that $\psi$ is loxodromic. By Proposition~\ref{prop:approaching the boundary}, 
$$
-2\log \lambda_1( \psi) = \lim_{n \rightarrow \infty} \log\left(1 - \norm{\psi^n(\vect{0})}\right)
$$
and 
$$
-2\log \lambda_1( \phi) = \lim_{n \rightarrow \infty} \log\left(1 - \norm{\phi^n(\vect{0})}\right).
$$
Since $\psi^n(\vect{0})=\psi^n(f(\vect{0})) = f(\phi^n (\vect{0}))$, Lemma~\ref{lem:bound with one} implies that 
\begin{align*}
-2\log \lambda_1( \psi) & = \lim_{n \rightarrow \infty} \frac{1}{n} \log\left(1 - \norm{\psi^n(f(\vect{0}))}\right) =  \lim_{n \rightarrow \infty}  \frac{1}{n}\log\left(1 - \norm{f(\phi^n(\vect{0}))}\right) \\
& \leq \alpha   \frac{1}{n} \lim_{n \rightarrow \infty} \log\left(1 - \norm{\phi^n(\vect{0})}\right) = -2\alpha \log \lambda_1( \phi). 
\end{align*}
So $\lambda_1(\psi) \geq \lambda_1(\phi)^{\alpha}$. Then Theorem~\ref{thm:lambda1 char loxo} implies that $\psi$ is loxodromic. 

Next we show that $f(L_{\phi} \cap \overline{\Bb^m})$ is a subset of $L_{\psi} \cap \overline{\Bb^M}$. Fix $z \in L_{\phi} \cap \overline{\Bb^m}$ and suppose for a contradiction that $f(z) \notin L_{\psi} \cap \overline{\Bb^M}$. By Theorem~\ref{thm: contraction rates of loxodromics}, we have
$$
\lim_{n \rightarrow \infty} \frac{1}{n} \log \|\phi^n(z) - x^+_\phi\| = -2 \log \lambda_1(\phi)
$$
and 
$$
\lim_{n \rightarrow \infty} \frac{1}{n} \log \|\psi^n(f(z)) - x^+_\psi\| = - \log \lambda_1(\psi). 
$$
The dynamical behavior of loxodromic elements (see Theorem~\ref{thm:structure of loxodromic}) implies that $f( x_\phi^\pm) = x_\psi^\pm$. Since $f$ is $\alpha$-H\"older continuous, we then have 
\begin{align*}
 - \log \lambda_1(\psi) & = \lim_{n \rightarrow \infty} \frac{1}{n} \log \|\psi^n(f(z)) - x^+_\psi\| =  \lim_{n \rightarrow \infty} \frac{1}{n} \log \|f(\phi^n(z)) - f(x^+_\phi)\| \\
 & \leq \alpha  \lim_{n \rightarrow \infty} \frac{1}{n} \log \|\phi^n(z) - x^+_\phi\| = -2\alpha  \log \lambda_1(\phi).
 \end{align*} 
 Since $\alpha > 1/2$, this implies that $\lambda_1(\psi) >  \lambda_1(\phi)$. However, since $f(\vect{0})=\vect{0}$, the Schwarz lemma implies that $\norm{f(z)} \leq \norm{z}$ for all $z \in \Bb^m$. Then Proposition~\ref{prop:approaching the boundary} yields
\begin{align*}
-2\log \lambda_1( \psi) & = \lim_{n \rightarrow \infty} \frac{1}{n} \log\left(1 - \norm{\psi^n(f(\vect{0}))}\right) =  \lim_{n \rightarrow \infty}  \frac{1}{n}\log\left(1 - \norm{f(\phi^n(\vect{0}))}\right) \\
& \geq    \lim_{n \rightarrow \infty} \frac{1}{n} \log\left(1 - \norm{\phi^n(\vect{0})}\right) = -2 \log \lambda_1( \phi). 
\end{align*} 
Hence $\lambda_{1}(\psi) \leq \lambda_{1}(\phi)$, which is a contradiction.

It follows that $f(z) \in L_{\psi} \cap \overline{\Bb^M}$. Since  $z \in L_{\phi} \cap \overline{\Bb^m}$ was arbitrary, we have 
\begin{equation*}
f(L_{\phi} \cap \overline{\Bb^m}) \subset L_{\psi} \cap \overline{\Bb^M},
\end{equation*} as desired.
\end{proof} 


\section{Affine lines intersecting the limit set}


Recall that the limit set of a subgroup $\Gamma < \Aut(\Bb^m)$ is
$$
\Lambda(\Gamma) :=\{ x \in \partial \Bb^m : \text{ there exists $\{g_n\} \subset \Gamma$ with $g_n(\vect{0}) \rightarrow x$}\}. 
$$

In this section we derive the following corollary to Theorem~\ref{thm:loxo lines are mapped to loxo lines}. 

\begin{corollary}\label{cor:complex lines intersecting the limit set} Suppose $f : \Bb^m\rightarrow \Bb^M$ is a proper holomorphic map that extends to an $\alpha$-H\"older continuous map $\overline{\Bb^m} \rightarrow \overline{\Bb^M}$ for some $\alpha > 1/2$. Let $\Gamma$ denote the image of the projection $\Gsf_f \rightarrow \Aut(\Bb^m)$. If $\Gamma$ is Zariski dense and $L$ is a complex affine line intersecting $\Lambda(\Gamma)$ at two or more points, then 
$$
f(L \cap \overline{\Bb^m}) 
$$
is contained in some complex affine line in $\Cb^M$. 
\end{corollary}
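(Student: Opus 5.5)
The plan is to approximate $L$ by lines of the form $L_\phi$ with $\phi \in \Gamma$ loxodromic, apply Theorem~\ref{thm:loxo lines are mapped to loxo lines} to each of them, and pass to the limit using the continuity of $f$ on $\overline{\Bb^m}$.

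The first step is to show that pairs of fixed points of loxodromic elements of $\Gamma$ are dense in $\Lambda(\Gamma)\times\Lambda(\Gamma)$ off the diagonal. By Proposition~\ref{prop:properties Z dense}, $\Lambda(\Gamma)$ is nonempty, and it is infinite since it is $\Gamma$-invariant and orbits of boundary points are infinite. Fix distinct $x, y \in \Lambda(\Gamma)$ lying in $L$, and neighborhoods $U \ni x$, $V \ni y$ in $\overline{\Bb^m}$ with disjoint closures. Choose $g_n, h_n \in \Gamma$ with $g_n(\vect 0)\to x$ and $h_n(\vect 0)\to y$. Passing to subsequences, $g_n^{-1}(\vect 0)\to x'$ and $h_n^{-1}(\vect 0)\to y'$.

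The standard ping-pong construction then applies. Using Zariski density (the orbits $\Gamma\cdot z$ are infinite, so we can find $k \in \Gamma$ moving $x'$ and $y'$ off bad points), we may replace $g_n$ and $h_n$ by $g_n k$ and $h_n k'$ so that the repelling points avoid $\overline U \cup \overline V$. By Theorem~\ref{thm:North South}, for large $n$, $g_n$ maps $\overline{\Bb^m}\smallsetminus U'$ into $U$, and similarly for $h_n$.

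Now consider $\phi_n = g_n h_n^{-1}$. The map $h_n^{-1}$ sends the complement of a small neighborhood of $y$ into a small neighborhood of the repelling point $y'$. Arranging that this region lies in the attracting region of $g_n$, the element $\phi_n$ maps $\overline{\Bb^m}\smallsetminus V$ into $U$, and $\phi_n^{-1}$ maps $\overline{\Bb^m}\smallsetminus U$ into $V$. A map of this form is loxodromic: by Brouwer it has a fixed point in $\overline U$, and Theorem~\ref{thm:North South} applied to its iterates shows that it is not elliptic or parabolic. Its fixed points satisfy $x^+_{\phi_n}\in U$ and $x^-_{\phi_n}\in V$. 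Shrinking $U$ and $V$ gives loxodromics $\phi_n\in\Gamma$ with $x^+_{\phi_n}\to x$ and $x^-_{\phi_n}\to y$. This is the main technical obstacle, and I would carry out the argument carefully via Theorem~\ref{thm:North South}. Alternatively, one can conjugate a single loxodromic element, which exists because $\Gamma$ is not contained in an elliptic or parabolic subgroup by Zariski density, and use $\Gamma$-invariance of the set of loxodromic fixed-point pairs.

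Next, for each $n$ pick $\psi_n$ with $(\phi_n,\psi_n)\in\Gsf_f$. Theorem~\ref{thm:loxo lines are mapped to loxo lines} gives $f(L_{\phi_n}\cap\overline{\Bb^m})\subset L_{\psi_n}$. Since $x^\pm_{\phi_n}\to x, y$ with $x\ne y$, the lines $L_{\phi_n}$ converge to $L$. Concretely, every $z\in L\cap\overline{\Bb^m}$ is a limit of points $z_n = x^+_{\phi_n}+s(x^-_{\phi_n}-x^+_{\phi_n})$ for a fixed $s\in\Cb$; these points lie in $\overline{\Bb^m}$ after a small adjustment (use $L\cap\overline{\Bb^m}$, a closed disc, and approximate points of its interior, then pass to closures).

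For the target lines, note that $L_{\psi_n}$ contains $f(x^\pm_{\phi_n}) = x^\pm_{\psi_n}$. These points converge to $f(x)$ and $f(y)$, which are distinct because $f(x)=f(y)$ would force the whole disc $f(L\cap\overline{\Bb^m})$ into a single line anyway. Otherwise, in the Grassmannian of affine lines, $L_{\psi_n}\to L'$, the line through $f(x)$ and $f(y)$. If $f(x)=f(y)$, take any limit line after passing to a subsequence in the compact set of lines meeting $\overline{\Bb^M}$.

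Finally, by continuity of $f$, each $f(z)$ is a limit of $f(z_n)\in L_{\psi_n}$, so $f(z)\in L'$. Hence $f(L\cap\overline{\Bb^m})\subset L'$, which proves the corollary.
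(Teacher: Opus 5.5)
Your proposal is correct and is essentially the paper's argument: the paper obtains the corollary directly from Theorem~\ref{thm:loxo lines are mapped to loxo lines} and Proposition~\ref{prop:density of loxs}, whose proof builds $\gamma_n = g_n h_n^{-1}$ (or $g_n\gamma h_n^{-1}$ when the repelling limits coincide) just as in your ping-pong step, and it leaves implicit the limiting argument that you spell out. The one step to tighten is why $\phi_n$ is loxodromic, because Theorem~\ref{thm:North South} cannot be applied to the powers of a single element without already knowing that they diverge. Either argue as the paper does (a non-loxodromic element fixing two distinct boundary points is conjugate into $\Msf$, hence fixes a complex geodesic disc pointwise, contradicting $\phi_n \to x$ locally uniformly off $y$), or note that an elliptic $\phi_n$ has powers accumulating at $\Id$, which is incompatible with $\phi_n^k(\overline{\Bb^m}\smallsetminus V)\subset U$ for all $k\geq 1$.
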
 

Corollary~\ref{cor:complex lines intersecting the limit set}  is an immediate consequence of Theorem~\ref{thm:loxo lines are mapped to loxo lines} and the following well-known result. 

\begin{proposition}\label{prop:density of loxs} Suppose $\Gamma< \Aut(\Bb^m)$ is a Zariski dense subgroup. If $x,y \in \Lambda(\Gamma) $ are distinct, then there exist loxodromic elements $\{\gamma_n\} \subset \Gamma$ where $x^+_{\gamma_n} \rightarrow x$ and $x^-_{\gamma_n} \rightarrow y$.  \end{proposition}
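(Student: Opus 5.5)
We prove Proposition~\ref{prop:density of loxs} with a ping-pong type construction: we compose an element whose attracting point is near $x$ with one whose repelling point is near $y$. The main quantitative input is Theorem~\ref{thm:North South}, together with a fixed-point criterion for loxodromic elements.

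Fix distinct $x,y \in \Lambda(\Gamma)$. Choose sequences $\{a_n\},\{b_n\}\subset\Gamma$ with $a_n(\vect{0})\to x$ and $b_n(\vect{0})\to y$. Passing to subsequences, we may assume $a_n^{-1}(\vect{0})\to x'$ and $b_n^{-1}(\vect{0})\to y'$ for some $x',y'\in\partial\Bb^m$. These limits lie in $\Lambda(\Gamma)$, and the divergence of $a_n,b_n$ forces them into the boundary.

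The difficulty is that $x'$ might coincide with $y$, or $y'$ with $x$, which would spoil the ping-pong. To avoid this, we pre- and post-compose with a fixed element. By Proposition~\ref{prop:properties Z dense}(3), orbits of boundary points are infinite, so we can pick $h\in\Gamma$ with $h(y)\neq x'$ and $h^{-1}(x)\neq y'$. Each condition excludes only one value of $h(y)$ or $h^{-1}(x)$, so both can be met.

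Now set $\gamma_n := a_n h b_n^{-1}$. Then:
\begin{itemize}
\item $\gamma_n(\vect{0})\to x$, since $hb_n^{-1}(\vect{0})\to h(y')$, which is not needed directly. We arrange this more cleanly through Theorem~\ref{thm:North South}: $b_n^{-1}$ contracts $\overline{\Bb^m}\smallsetminus\{y\}$ toward $y'$, then $h$ moves $y'$ to $h(y')$, and then $a_n$ sends everything off $x'$ to $x$. So we also need $h(y')\neq x'$, a third generic condition that is imposed in the same way.
\item $\gamma_n^{-1}(\vect{0})\to y$, by the symmetric argument.
\end{itemize}
Consequently $\gamma_n$ maps a fixed small closed neighborhood $U$ of $x$ into itself for large $n$, uniformly, because $U$ avoids $y$ and is sent near $x$. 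Likewise $\gamma_n^{-1}$ maps a small neighborhood $V$ of $y$ into itself. Here we choose $U,V$ disjoint, with $U$ avoiding $y$ and $V$ avoiding $x$.

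It remains to see that each $\gamma_n$, for large $n$, is loxodromic with $x^+_{\gamma_n}\in U$ and $x^-_{\gamma_n}\in V$. By Brouwer's theorem, $\gamma_n$ has a fixed point in $U$ and $\gamma_n^{-1}$ has one in $V$. These give two distinct fixed points of $\gamma_n$ in $\partial\Bb^m$: the uniform convergence to $x$ forces any fixed point in $U$ to lie on $\partial\Bb^m$, since $\gamma_n(\vect{0})$ is close to the boundary.

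Two boundary fixed points rule out the parabolic case. To exclude the elliptic case, note that $\gamma_n(\vect{0})\to\partial\Bb^m$ along powers: since $\gamma_n^k(U)\subset \gamma_n(U)$, the orbit $\gamma_n^k(\vect{0})$ (with $\vect{0}$ placed, after adjustment, in a region mapped into $U$) cannot stay in a compact set. An elliptic element has bounded orbits, as it is conjugate into $\Ksf$.

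Finally, since the attracting fixed point attracts everything except the repelling one, and $U$ is forward-invariant, the attracting point $x^+_{\gamma_n}$ lies in $U$ and $x^-_{\gamma_n}$ lies in $V$. Shrinking $U,V$ along a diagonal subsequence then yields $x^\pm_{\gamma_n}\to x,y$.

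The main obstacle is this last classification step: showing rigorously that the fixed points are in the boundary and that the element is not elliptic. The cleanest route is probably to check directly that $\lambda_1(\gamma_n)>1$ via Theorem~\ref{thm:lambda1 char loxo}. For that, the fact that $\gamma_n$ maps $U$ strictly into a much smaller set gives a contraction which is incompatible with an element conjugate into $\Ksf$ or a unipotent element.
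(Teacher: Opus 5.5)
\textbf{There is a genuine gap.} Your construction matches the paper's. The paper uses $\gamma_n=g_nh_n^{-1}$, or $g_n\gamma h_n^{-1}$ with $\gamma y^-\neq x^-$ when the backward limits coincide. Your condition $h(y')\neq x'$ is exactly what gives both $\gamma_n(\vect{0})\to x$ and $\gamma_n^{-1}(\vect{0})\to y$, and it follows directly from Proposition~\ref{prop:properties Z dense}(3). So drop the conditions $h(y)\neq x'$ and $h^{-1}(x)\neq y'$: they are not needed, and you never justified that all the conditions can hold at once. The North--South and Brouwer steps are also the paper's.

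The gap is the step you flag yourself: you never show that $\gamma_n$ is not elliptic. The argument you sketch does not work. From $\gamma_n^k(U)\subset\gamma_n(U)$ you only learn that the forward orbit of $\vect{0}$ stays near $x$. That does not make it leave every compact subset of $\Bb^m$, since a compact subset of $\Bb^m$ can sit inside a small ball around $x$. So ``elliptic elements have bounded orbits'' gives no contradiction, and the appeal to $\lambda_1(\gamma_n)>1$ is never carried out. Likewise, ``uniform convergence forces a fixed point in $U$ to lie on $\partial\Bb^m$'' is not a valid inference. Instead, take $U\subset\partial\Bb^m$ a closed topological ball, as the paper does; then the Brouwer fixed points lie on the boundary automatically.

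\textbf{How to close it.} The missing ingredient is either recurrence or the normal form of an element fixing two boundary points.

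\emph{Recurrence.} If $\gamma_n$ were elliptic, it would be conjugate into the compact group $\Ksf$, so $\gamma_n^{l_i}\to\Id$ for some integers $l_i\geq 1$. Take $U=\partial\Bb^m\cap\overline{B(x,\epsilon)}$ with $\gamma_n(U)\subset B(x,\epsilon/2)$, where $B$ denotes a Euclidean ball. Then $\gamma_n^{l}(U)\subset B(x,\epsilon/2)$ for every $l\geq 1$. Hence a point $p\in U$ with $\norm{p-x}=\epsilon$ cannot satisfy $\gamma_n^{l_i}(p)\to p$.

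\emph{The paper's route.} Conjugate the two fixed points $x_n^\pm$ to $\pm e_1$; then $\gamma_n$ becomes $k_na_{t_n}$ with $k_n\in\Msf$. Non-loxodromic forces $t_n=0$, so $\gamma_n$ fixes $L_n\cap\Bb^m$ pointwise, where $L_n$ is the line through $x_n^\pm$. Since $x_n^+\to x$, $x_n^-\to y$ and $x\neq y$, you can pick $z_n\in L_n\cap\Bb^m$ converging to an interior point $z$. Then $z_n=\gamma_n(z_n)\to x$ by uniform North--South convergence, contradicting $z\in\Bb^m$.

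With either argument in place, the rest of your proposal goes through. Two distinct boundary fixed points exclude parabolics, and forward invariance of $U$ identifies $x^+_{\gamma_n}\in U$ and $x^-_{\gamma_n}\in V$.
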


Since the proof is short we include it. 

\begin{proof} Fix $\{g_n\}, \{h_n\} \subset \Gamma$ with $g_n(\vect{0}) \rightarrow x$ and $h_n(\vect{0}) \rightarrow y$. Passing to subsequences we can suppose that $g_n^{-1}(\vect{0}) \rightarrow x^-$ and $h_n^{-1}(\vect{0}) \rightarrow y^-$. 

We first consider the case where $y^- \neq x^-$. Let $\gamma_n:= g_n h_n^{-1}$. Then Theorem~\ref{thm:North South}  implies that 
\begin{equation}\label{eqn:gamma n behavior}
\lim_{n \rightarrow \infty} \gamma_n(\vect{0}) = x \quad \text{and} \quad \lim_{n \rightarrow \infty} \gamma_n^{-1}(\vect{0}) = y.
\end{equation} 
\
Fix a compact neighborhood $B \subset \partial \Bb^m$ of $x$ such that $B$ is homeomorphic to a closed ball in $\Rb^{2m-1}$ and $y \notin B$. Since $x \neq y$, Theorem~\ref{thm:North South} implies that $\gamma_n(B) \subset B$ for $n$ sufficiently large. Then for such $n$, $\gamma_n$ has a fixed point $x_n^+$ in $B$. Since $B$ can be chosen arbitrarily small, we must have $x_n^+ \rightarrow x$. Likewise, for $n$ sufficiently large $\gamma_n^{-1}$ has a fixed point $x_n^-$  and $x_n^- \rightarrow y$. Moreover, $x_n^+\neq x_n^-$ for large $n$.

To finish the proof we need to show that $\gamma_n$ is loxodromic for $n$ sufficiently large. Suppose not. Then after passing to a subsequence we can suppose that each $\gamma_n$ is non-loxodromic and that $x_n^+$, $x_n^-$ exist for all $n$. Since $\Aut(\Bb^m)$ acts doubly transitively on $\Bb^m$, see~\cite[Corollary 2.2.5]{Abate1989}, for each $n$ we can fix $g_n \in \Aut(\Bb^m)$ such that $g_n(x_n^\pm) = \pm e_1$. Then $g_n \gamma_n g_n^{-1}$ fixes $\pm e_1$ and it is easy to show that $g_n \gamma_n g_n^{-1} = k_n a_{t_n}$ for some $k_n \in \Msf$ and $t_n \in \Rb$. Since $\gamma_n$ is not loxodromic, we must have $t_n =0$. Then $g_n\gamma_ng_n^{-1}$ fixes the set $\Cb \cdot e_1 \cap \Bb^m$ pointwise. This implies that $\gamma_n$ fixes the set $L_n \cap \Bb^m$ pointwise, where $L_n$ is the complex affine line containing $x_n^+, x_n^-$. Since  $x_n^+ \rightarrow x$,  $x_n^- \rightarrow y$, and $x \neq y$ we can fix $z \in \Bb^m$ and $z_n \in L_n \cap \Bb^m$ such that $z_n \rightarrow z$. Then 
$$
z =  \lim_{n \rightarrow \infty} z_n=\lim_{n \rightarrow \infty} \gamma_n(z_n)
$$
However, Theorem~\ref{thm:North South} and Equation~\eqref{eqn:gamma n behavior} imply that 
$$
x = \lim_{n \rightarrow \infty} \gamma_n(z_n)
$$
So we have a contradiction. This completes the proof when $y^- \neq x^-$. 

Next consider the case where $y^- = x^-$. Using Proposition~\ref{prop:properties Z dense}, we can fix $\gamma \in \Gamma$ with $\gamma y^- \neq x^-$. Then let $\gamma_n : = g_n \gamma h_n^{-1}$ and repeat the argument above. 
\end{proof}

\section{Analytic sets}\label{sec:analytic sets}

Suppose $f : \Bb^m\rightarrow \Bb^M$ is a proper holomorphic map which extends continuously to $\overline{\Bb^m}$. For $x \in \partial \Bb^m$, let
\[
Z_{x} := \left\{y \in \partial\Bb^m\smallsetminus\{x\} : f\left(L_{xy} \cap \overline{\Bb^m}\right) \text{ is contained in a complex affine line}\right\},
\]
where $L_{xy}$ is the complex affine line containing $x,y$.

The goal of this section is to show that \(Z_{x}\) is a real analytic variety.

\begin{theorem}\label{thm: Z is a real analaytic variety}
  If \(x \in \partial \Bb^m\), then \(Z_x\) is a real analytic variety in $\Cb^m\smallsetminus \{x\}$. 
\end{theorem}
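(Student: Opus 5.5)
We will write the condition ``$f(L_{xy}\cap\overline{\Bb^m})$ lies in a complex affine line'' as the vanishing of countably many real analytic functions of $y$, and then invoke Theorem~\ref{thm:noetherian property}. Since $\partial\Bb^m$ is itself a real analytic submanifold, it is cut out near each point by $\norm{y}^2-1$. So it suffices to work with real analytic functions defined on an open neighborhood of $\partial\Bb^m\smallsetminus\{x\}$ in $\Cb^m\smallsetminus\{x\}$, and to add $(\norm{y}^2-1)^2$ to the list of defining functions.

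\textbf{Parametrizing the disc.} For $y\in\partial\Bb^m\smallsetminus\{x\}$, the disc $L_{xy}\cap\Bb^m$ is parametrized by $\zeta\mapsto x+\zeta(y-x)$. Since $x,y\in\partial\Bb^m$ and the ball is strictly convex, the point $\frac{x+y}{2}$ lies in $\Bb^m$. Hence there is a fixed small disc $\{\abs{\zeta-\tfrac12}<r\}$, with $r$ uniform for $y$ in a small neighborhood $\Oc$ of a given $y_0$, on which $x+\zeta(y-x)\in\Bb^m$ for all $y\in\Oc$. The set $f(L_{xy}\cap\Bb^m)$ lies in a complex line if and only if the holomorphic curve $F_y(\zeta):=f(x+\zeta(y-x))$ does so on this small disc, by the identity theorem applied to the holomorphic minors below. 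Continuity of the extension then gives the same conclusion on $L_{xy}\cap\overline{\Bb^m}$, since the complex line is closed.

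\textbf{The line condition as analytic equations.} A holomorphic curve $F$ lies in a complex affine line if and only if $F'(\zeta)$ and $F'(\zeta')$ are $\Cb$-linearly dependent for all $\zeta,\zeta'$. Equivalently, all $2\times2$ minors of the $M\times 2$ matrix $[F'(\zeta)\ F'(\zeta')]$ vanish. (If $F'\equiv0$ the curve is a point, which is fine.) Expanding $F_y$ in a power series about $\zeta=\tfrac12$, with coefficients $c_k(y)=\frac{1}{k!}\,\partial_\zeta^k F_y(\tfrac12)$, the condition becomes: $c_j(y)$ and $c_k(y)$ are linearly dependent for all $j,k\ge1$. That is,
\[
h_{j,k,p,q}(y):=\abs{c_{j,p}(y)c_{k,q}(y)-c_{j,q}(y)c_{k,p}(y)}^2=0 \quad\text{for all } j,k\ge1,\ 1\le p<q\le M.
\]
This countable family of equations is exactly the defining condition for $Z_x\cap\Oc$.

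\textbf{Real analyticity and closedness.} Each $c_k(y)$ is a finite combination of the holomorphic derivatives $D^\beta f(x+\tfrac12(y-x))$ multiplied by monomials in $(y-x)$. These are holomorphic in $y$ on the open set where $x+\tfrac12(y-x)\in\Bb^m$, which contains $\partial\Bb^m\smallsetminus\{x\}$. Hence each $h_{j,k,p,q}$ is real analytic, and the description above shows that $Z_x$ is relatively closed in $\partial\Bb^m\smallsetminus\{x\}$, hence closed in $\Cb^m\smallsetminus\{x\}$ intersected with the sphere. Theorem~\ref{thm:noetherian property} then yields the result.

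\textbf{Main obstacle.} The delicate step is justifying that vanishing of the minors on a small disc around the midpoint controls the whole closed disc. This needs connectedness of $L_{xy}\cap\Bb^m$, which is a disc, together with the identity theorem and continuity up to the boundary. It also requires a uniform choice of the base point and radius for $y$ near $y_0$, which is where the proof must be handled carefully.
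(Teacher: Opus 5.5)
Your proof is correct and follows the same strategy as the paper: express $Z_x$ as the common zero set of countably many real analytic functions built from $f$ near the midpoint $\frac{x+y}{2}$, use the identity theorem on the disc $L_{xy}\cap\Bb^m$ and continuity to pass to $L_{xy}\cap\overline{\Bb^m}$, and conclude with Theorem~\ref{thm:noetherian property}. The only difference is the choice of defining functions. You use the $2\times 2$ minors of the Taylor coefficients of $\zeta\mapsto f(x+\zeta(y-x))$ at $\zeta=\tfrac12$; this is intrinsic and works globally on $\{y : \norm{x+y}<2\}$, so the uniform-radius issue you flag as the main obstacle does not actually arise. The paper instead samples $f$ at the points $\frac{2n+1}{4n}y+\frac{2n-1}{4n}x \to \frac{x+y}{2}$ and tests membership in the line through $f(x)$ and $f(\frac{x+y}{2})$, using a locally chosen invertible matrix $A(y)$.
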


The rest of the section is devoted to the proof of the theorem. Fix $x \in \partial \Bb^m$. Using Theorem~\ref{thm:noetherian property}, it suffices to fix $y_0 \in Z_x$, then find an open neighborhood $\Uc$ of $y_0$ and countably many real analytic functions $\{ h_n : \Uc \rightarrow \Rb\}_{n=0}^\infty$ such that 
$$
Z_x \cap \Uc = \bigcap_{n =0}^\infty h_n^{-1}(0).
$$
Fix an open neighborhood $\Uc$ of $y_0$ in $\Cb^m\smallsetminus \{x\}$ such that 
$$
\left\{  \frac{x+y}{2} \right\} \bigcup \left\{ \frac{2n+1}{4n} y + \frac{2n-1}{4n} x  : n\geq 1\right\} \subset \Bb^m.
$$
for all $y \in \Uc$. For $y \in \Uc$, set
$$
z(y) := \frac{x+y}{2}.
$$
Fix $u_2, \dots, u_M$ such that 
$$
f\left( z(y_0) \right) - f(x), u_2, \dots, u_M
$$
forms a basis of $\Cb^M$. Next, for $y \in \Uc$,  consider the $M$-by-$M$ matrix
$$
A(y) := \left[ f\left(z(y) \right) - f(x), u_2, \dots, u_M\right].
$$
Notice that $y\in \Uc \mapsto \det A(y) \in \Cb$ is continuous, so after shrinking $\Uc$ we can assume that 
$$
\det A(y) \neq 0 
$$
for all $y \in \Uc$. Lastly, let $\pi : \Cb^{M} \rightarrow \Cb^{M-1}$ be the projection given by
\begin{align*}
\pi(z_1,\dots, z_M) = (z_2, \dots, z_M).
\end{align*}

\begin{lemma}\label{lem:detector gadget}  If $y \in \Uc$ and $w \in \Bb^m$, then 
$$
f(w) \in L_{f(x)f(z(y))} \Longleftrightarrow \pi \big[A(y)^{-1} (f(w)-f(x)) \big] =\vect{0}.
$$
\end{lemma}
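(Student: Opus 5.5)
This is linear algebra: we change coordinates by $A(y)$ and read off membership in the line from the vanishing of the last $M-1$ coordinates. Fix $y \in \Uc$ and write $v := f(z(y)) - f(x)$. Since $\det A(y) \neq 0$, the first column $v$ of $A(y)$ is nonzero. So $f(x) \neq f(z(y))$, and the complex affine line $L_{f(x)f(z(y))}$ through these two points is well defined and equals
$$
L_{f(x)f(z(y))} = f(x) + \Cb \cdot v .
$$
In particular, a point $p \in \Cb^M$ lies on this line if and only if $p - f(x) \in \Cb \cdot v$.

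Next we identify the preimage of the line $\Cb \cdot v$ under $A(y)$. By construction $A(y) e_1 = v$, where $e_1$ is the first standard basis vector of $\Cb^M$. Since $A(y)$ is invertible, $A(y)^{-1}$ is a linear isomorphism sending $\Cb \cdot v$ onto $\Cb \cdot e_1$. Hence for any $u \in \Cb^M$ we have $u \in \Cb \cdot v$ if and only if $A(y)^{-1} u \in \Cb \cdot e_1$. Moreover, $\Cb \cdot e_1 = \ker \pi$, since $\pi$ simply discards the first coordinate.

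Chaining these equivalences with $u = f(w) - f(x)$ gives the claim:
$$
f(w) \in L_{f(x)f(z(y))} \iff f(w)-f(x) \in \Cb \cdot v \iff A(y)^{-1}(f(w)-f(x)) \in \Cb e_1 \iff \pi\big[A(y)^{-1}(f(w)-f(x))\big] = \vect{0}.
$$
Note that the hypothesis $w \in \Bb^m$ is never used; the equivalence holds for any $w \in \overline{\Bb^m}$, and any point of $\Cb^M$ could replace $f(w)$. The only step needing care is the first: nondegeneracy of the line comes from $\det A(y) \neq 0$ forcing $f(z(y)) \neq f(x)$. Everything else is routine.
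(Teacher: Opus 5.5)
Your proof is correct and is the paper's argument: the same chain of three equivalences, using $A(y)e_1 = f(z(y))-f(x)$ and $\ker \pi = \Cb \cdot e_1$. You also note that $\det A(y) \neq 0$ forces $f(z(y)) \neq f(x)$, so the line $L_{f(x)f(z(y))}$ is well defined; the paper leaves this implicit.
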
 

\begin{proof} Notice
\begin{align*} 
f(w) \in L_{f(x)f(z(y))}&  \Longleftrightarrow f(w) - f(x) \in \Cb \cdot (f(z(y)) - f(x)) \\
& \Longleftrightarrow  A(y)^{-1} (f(w)-f(x)) \in \Cb \cdot e_1\\
& \Longleftrightarrow \pi \big[A(y)^{-1} (f(w)-f(x))\big] =\vect{0}. \qedhere
\end{align*} 
\end{proof}

Set $h_0(z) = 1-\norm{z}^2$ and for $n \in \Nb$ define $h_n : \Uc \rightarrow \Rb$ by 
$$
h_n(y) = \norm{ \pi \bigg[A(y)^{-1} \left(f\left(  \frac{2n+1}{4n} y + \frac{2n-1}{4n} x \right)-f(x)\right) \bigg]}^2.
$$
Since $\frac{2n+1}{4n} y + \frac{2n-1}{4n} x \in \Bb^m$, each $h_n$ is real analytic. 
\begin{lemma} We have
$$
Z_x \cap \Uc = \bigcap_{n=0}^\infty h_n^{-1}(0).
$$
\end{lemma}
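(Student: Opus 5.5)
We prove the two inclusions separately. The guiding observation is that the points
$$
w_n(y) := \frac{2n+1}{4n} y + \frac{2n-1}{4n} x = \frac{x+y}{2} + \frac{1}{4n}(y - x)
$$
all lie on the line $L_{xy}$, inside $\Bb^m$, and accumulate at $z(y) = \frac{x+y}{2}$ as $n \to \infty$. Note also that $h_0^{-1}(0) \cap \Uc = \partial \Bb^m \cap \Uc$, so the condition $h_0(y)=0$ just records that $y \in \partial\Bb^m$. By Lemma~\ref{lem:detector gadget}, for $y \in \Uc$ we have $h_n(y) = 0$ if and only if $f(w_n(y)) \in L_{f(x)f(z(y))}$.

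\emph{The inclusion $\subset$.} Suppose $y \in Z_x \cap \Uc$. Then $h_0(y) = 0$ since $y \in \partial \Bb^m$. By definition of $Z_x$, the set $f(L_{xy} \cap \overline{\Bb^m})$ lies in some complex affine line $L'$. This set contains $f(x)$ and $f(z(y))$. We have $f(z(y)) \neq f(x)$, because $\det A(y) \neq 0$ forces $f(z(y)) - f(x) \neq 0$. Hence $L' = L_{f(x)f(z(y))}$. Each $w_n(y)$ lies in $L_{xy} \cap \Bb^m$, so $f(w_n(y)) \in L'$, and Lemma~\ref{lem:detector gadget} gives $h_n(y) = 0$ for every $n \geq 1$.

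\emph{The inclusion $\supset$.} Suppose $h_n(y) = 0$ for all $n \geq 0$. Then $y \in \partial\Bb^m \smallsetminus\{x\}$, and $f(w_n(y)) \in L' := L_{f(x)f(z(y))}$ for all $n \geq 1$. The set $D := L_{xy} \cap \Bb^m$ is a disc containing $z(y)$, and $f|_D$ is holomorphic. Parametrize $D$ by $\zeta \mapsto x + \zeta(y-x)$, and consider the holomorphic map
$$
F(\zeta) := \pi\big[A(y)^{-1}\big(f(x + \zeta(y-x)) - f(x)\big)\big],
$$
which is defined on the connected open set $\{\zeta \in \Cb : x + \zeta(y-x) \in \Bb^m\}$. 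This map vanishes at $\zeta_n = \frac{2n+1}{4n}$. These points converge to $\zeta = \frac{1}{2}$, which lies in the domain, and they are distinct. By the identity theorem, $F \equiv 0$ on the disc, so $f(D) \subset L'$. Since $f$ is continuous on $\overline{\Bb^m}$ and $L'$ is closed, and since $L_{xy} \cap \overline{\Bb^m}$ is the closure of $D$ (because $D$ is a nonempty open disc in the line), we conclude $f(L_{xy} \cap \overline{\Bb^m}) \subset L'$. Therefore $y \in Z_x$.

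The only delicate points are checking that the accumulation point $\zeta = \frac{1}{2}$ lies in the domain of $F$, which holds because $z(y) \in \Bb^m$ by the choice of $\Uc$, and the closure argument passing from $D$ to $L_{xy}\cap\overline{\Bb^m}$. Neither should be a genuine obstacle.
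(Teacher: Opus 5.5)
Your proof is correct and follows the paper's argument: the inclusion $\subset$ comes from Lemma~\ref{lem:detector gadget} applied at the points $\frac{2n+1}{4n}y+\frac{2n-1}{4n}x$, and the inclusion $\supset$ comes from the identity theorem for $\lambda \mapsto \pi\big[A(y)^{-1}(f(\lambda y+(1-\lambda)x)-f(x))\big]$, whose zeros accumulate at $\frac{1}{2}$. You also spell out points the paper leaves implicit: $\det A(y)\neq 0$ forces $f(z(y))\neq f(x)$ and so fixes the target line, the parameter domain is connected (it is convex), and continuity of $f$ carries the conclusion from $L_{xy}\cap\Bb^m$ to $L_{xy}\cap\overline{\Bb^m}$.
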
 

\begin{proof} First assume $y \in Z_x \cap \Uc$. Then $y \in \partial \Bb^m$ and so $h_0(y)=0$. Further,
$$
f(w) \in  L_{f(x)f(z(y))} 
$$
for all $w \in L_{xy} \cap \Bb^m$. So Lemma~\ref{lem:detector gadget} implies that $h_n(y) = 0$ for all $n \geq 1$.

Next assume that $y \in \bigcap_{n} h_n^{-1}(0)$. Let 
$$
\Oc : = \{ \lambda \in \Cb : \lambda y + (1-\lambda )x \in \Bb^m\}
$$
and consider the holomorphic map $h : \Oc \rightarrow \Cb^{M-1}$ defined by 
$$
h(\lambda) = \pi \big[ A(y)^{-1} \left(f\left(  \lambda y + (1-\lambda )x \right)-f(x)\right) \big].
$$
By assumption, $h$ vanishes at each $\frac{2n+1}{4n}$. Since $\frac{2n+1}{4n} \rightarrow \frac{1}{2}$, this implies that $h$ is identically zero. Then Lemma~\ref{lem:detector gadget} implies that 
$$
f(w) \in  L_{f(x)f(z(y))} 
$$
for all $w \in L_{xy} \cap \Bb^m$. So $y \in Z_x \cap \Uc$. 
\end{proof} 

\begin{remark} Although this is not needed for the proof of Theorem~\ref{thm:main}, we note that it is possible to show  that the set 
\[
\left\{z \in\Bb^m : f\left(L_{xz} \cap \overline{\Bb^m}\right) \text{ is contained in a complex affine line}\right\}.
\]
is a complex subvariety of $\Bb^m$. 
\end{remark} 

\section{Analytic envelopes}

In this section we show that the limit set of a Zariski dense subgroup cannot be contained in a proper real analytic subset of $ \partial \Bb^m \smallsetminus \{x\}$.

\begin{theorem}\label{thm:analytic envelopes} Suppose $\Gamma < \Aut(\Bb^m)$ is Zariski dense. If $x \in \partial \Bb^m$ and $Z \subset \Cb^m \smallsetminus \{ x\}$ is a real analytic set with 
$$
\Lambda(\Gamma)\smallsetminus \{x\} \subset Z \subset \partial \Bb^m \smallsetminus \{x\},
$$ 
then $Z = \partial \Bb^m \smallsetminus \{ x\}$.
\end{theorem}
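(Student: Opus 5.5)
The plan is to argue by contradiction, using a loxodromic element of $\Gamma$ to ``blow up'' $Z$ near a point of the limit set. The blow-up should produce a nonzero real polynomial vanishing on $\Lambda(\Gamma)$, which contradicts Proposition~\ref{prop:properties Z dense}(4).

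\textbf{Reduction to a local statement.} Since $\partial\Bb^m\smallsetminus\{x\}$ is a connected real analytic manifold (here $m\geq 2$), the argument of Proposition~\ref{prop:ra set contains open set implies it is everything} applies in local real analytic coordinates on the sphere. So if $Z$ has nonempty interior in $\partial\Bb^m$, then $Z=\partial\Bb^m\smallsetminus\{x\}$. Suppose instead that it does not. By Proposition~\ref{prop:properties Z dense}(3), $\Lambda(\Gamma)$ is infinite, so Proposition~\ref{prop:density of loxs} lets us pick a loxodromic $\gamma\in\Gamma$ with $p:=x^+_\gamma\neq x$ and $x^-_\gamma\neq x$. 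Its fixed points lie in $\Lambda(\Gamma)$, so $p\in Z$. Choose a neighborhood $\Oc$ of $p$ and a real analytic $h$ with $Z\cap\Oc=h^{-1}(0)$. Then $h|_{\partial\Bb^m\cap\Oc}$ is not identically zero, since otherwise $Z$ would have interior.

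\textbf{Rescaling.} Conjugate by an element of $\Aut(\Bb^m)$ so that $\gamma=ka_t$ with $k\in\Msf$, $t>0$, and $p=e_1$. Then pass to Heisenberg-type coordinates $(w',u)\in\Cb^{m-1}\times\Rb$ on $\partial\Bb^m\smallsetminus\{-e_1\}$ via the Cayley transform, with $p$ the origin. In these coordinates $\gamma^n$ acts by
$$(w',u)\mapsto (e^{-nt}U^n w',\, e^{-2nt}u)$$
for some $U\in\Usf(m-1)$. Write $\tilde h$ for $h$ in these coordinates, and expand it at $0$ into weighted-homogeneous pieces, with weight $1$ on $w'$ and weight $2$ on $u$. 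Let $P_d$ be the nonzero piece of lowest weighted degree $d$; it exists because $\tilde h\not\equiv0$ near $0$. Along a subsequence with $U^{n}\to U_\infty$ we then have
$$e^{dnt}\,\tilde h\circ\gamma^n\longrightarrow P_d\circ(U_\infty,1)=:P$$
locally uniformly. The limit $P$ is a nonzero real polynomial.

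\textbf{Vanishing on the limit set.} Take $q\in\Lambda(\Gamma)\smallsetminus\{x^-_\gamma,-e_1\}$. By Theorem~\ref{thm:structure of loxodromic}, $\gamma^n q\to p$, so $\gamma^n q\in\Oc$ for large $n$. Since $\Lambda(\Gamma)$ is $\Gamma$-invariant, $\gamma^nq\in\Lambda(\Gamma)\smallsetminus\{x\}\subset Z$, and hence $\tilde h(\gamma^n q)=0$. Passing to the limit gives $P(q)=0$. Now pull $P$ back to $\Cb^m$ through the Cayley transform, which is rational. After clearing a power of the denominator and multiplying by $\norm{z-x^-_\gamma}^2\norm{z+e_1}^2$, we obtain a real polynomial $Q$ on $\Cb^m$ that vanishes on $\Lambda(\Gamma)$. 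Proposition~\ref{prop:properties Z dense}(4) then forces $Q|_{\partial\Bb^m}\equiv0$, so $P$ vanishes on an open subset of the Heisenberg coordinates, and hence everywhere since it is a polynomial. This contradicts $P\neq 0$.

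The main obstacle is the rescaling step. One has to set up the coordinates so that $\gamma$ acts exactly by an anisotropic dilation composed with a unitary rotation. One also has to justify the locally uniform convergence of the normalized functions, using analyticity of $h$ on a fixed polydisc, the bound on the Taylor remainder after the weighted scaling, and a subsequence to handle the rotation $k^n$. Finally, one must check that the lowest-weight piece is genuinely nonzero as a function on the sphere and not merely as a formal expression. The first approach would be to write $h$ directly as a convergent power series in $(w',u)$, which avoids the issue of restricting to the sphere.
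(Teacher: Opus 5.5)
Your proposal is correct and follows essentially the same route as the paper. Both arguments reduce to the case where the local defining function at the attracting fixed point $e_1$ of a loxodromic $ka_t\in\Gamma$ is not identically zero, then rescale by powers of $ka_t$ in the Cayley (Heisenberg) coordinates to extract the lowest weighted-degree part as a nonzero polynomial vanishing on the limit set, and finally pull back through the Cayley transform to contradict Proposition~\ref{prop:properties Z dense}. The differences are cosmetic: you pass to $U^{n}\to U_\infty$ rather than $U^{n_j}\to\Id$, and you handle the point $-e_1$ by multiplying by a factor vanishing there instead of using the paper's continuity argument.
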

The argument we provide is similar to the proof of ~\cite[Lemma 2.11]{MR4589560}. For the proof it will be more convenient to work in the hyperboloid model of the unit ball and so we briefly discuss this model before starting the proof. 

\subsection{The hyperboloid model} 

Let 
$$
\Pc^m := \left\{ z \in \Cb^m : {\rm Im}(z_1) > \abs{z_2}^2 + \cdots + \abs{z_m}^2 \right\}.
$$
Then the map 
$$
F(z) = \left(  i\frac{1-z_1}{1+z_1}, \frac{iz_2}{1+z_1}, \dots, \frac{iz_m}{1+z_1} \right)
$$
is a biholomorphism from $\Bb^m $ to $ \Pc^m$ with inverse 
$$
F^{-1}(z) = \left( \frac{i-z_1}{z_1+i}, \frac{2z_2}{z_1+i}, \dots, \frac{2z_m}{z_1+i} \right).
$$
Notice that $F$ extends to a real analytic diffeomorphism on the boundary, which we will also denote by $F$:
$$
F: \partial \Bb^m \smallsetminus \{-e_1\} \rightarrow \partial \Pc^m
$$
and $F(e_1)=\vect{0}$. Moreover, 
$$
\lim_{z \rightarrow -e_1} \norm{F(z)}=\infty.
$$

In what follows we write $z \in \Cb^m$ as $z=(v,w)$ where $v \in \Cb$ and $w \in \Cb^{m-1}$. Then 
\begin{equation}\label{eqn:at in hyperboloid} 
F \circ a_t \circ F^{-1}(v,w) = (e^{-2t}v, e^{-t}w)
\end{equation}
for all $t \in \Rb$. Further, if $k \in \Msf$, then 
\begin{equation}\label{eqn:M in hyperboloid} 
F \circ k \circ F^{-1}(v,w) = (v, Uw)
\end{equation} 
for some $U \in \Usf(m-1)$.

\subsection{Proof of Theorem~\ref{thm:analytic envelopes}}

Since $\Lambda(\Gamma)$ is infinite (see Proposition~\ref{prop:properties Z dense}), using  Proposition~\ref{prop:density of loxs}  we can fix a loxodromic element $\phi_0 \in \Gamma$ with fixed points $x^\pm_{\phi_0} \in \partial \Bb^n \smallsetminus \{ x\}$. Let $Z$ be as in the statement of Theorem~\ref{thm:analytic envelopes}. By conjugating $\Gamma$ and translating $Z$, we can assume that $\phi_0 = k a_t$ for some $t > 0$ and $k \in \Msf$, and $x^\pm_{\phi_0} = \pm e_1$. 

Fix a connected neighborhood $\Oc_0$ of $e_1$ in $\partial \Bb^m$ such that 
$$
\Oc_0 \cap Z = h_0^{-1}(0)
$$
for some real analytic function $h_0 : \Oc_0 \rightarrow \Rb$. 

If $h_0$ is constant, then $\Oc_0 \cap Z = \Oc_0$ and hence $Z$ contains an open subset of $\partial \Bb^m \smallsetminus \{x\}$. By stereographic projection, there is a real analytic diffeomorphism from $\partial \Bb^m \smallsetminus \{x\}$ to $\Rb^{2m-1}$ and the image of $Z$ under this map is a real analytic subset of $\Rb^{2m-1}$. Then Proposition~\ref{prop:ra set contains open set implies it is everything} implies that $Z =\Bb^m \smallsetminus \{x\}$. So, it suffices to assume that $h_0$ is non-constant and obtain a contradiction.

Let $F : \Bb^m \rightarrow \Pc^m$ be the biholomorphism introduced in the previous subsection. 
We also have a real analytic diffeomorphism $\pi: \partial \Pc^m \rightarrow \Rb \times \Cb^{m-1}$ given by 
$$
\pi(z_1,\dots, z_m) = ({\rm Re}(z_1), z_2, \dots, z_m). 
$$
Let $\zeta : \partial \Bb^m \smallsetminus \{-e_1\} \rightarrow \Rb \times \Cb^{m-1}$ be the real analytic diffeomorphism given by $\zeta(z) = \pi \circ F(z)$. For the rest of the section, let $
\phi : =\zeta\phi_0 \zeta^{-1} , \Lambda : = \zeta(\Lambda(\Gamma)), \Oc:= \zeta(\Oc_0), \mathrm{and} \ h : = h_0\circ \zeta^{-1}.$
We summarize the basic properties of these objects in the next lemma. 

\begin{lemma}\label{lem:basic properties of translated objects} \
\begin{enumerate}
\item $h : \Oc \rightarrow \Rb$ is a non-constant real analytic function.
\item $\Oc$ is a neighborhood of $\vect{0}$ in $\Rb \times \Cb^{m-1}$ and 
$$
\Lambda \cap \Oc \subset h^{-1}(0).
$$
\item If $P : \Rb \times \Cb^{m-1} \rightarrow \Rb$ is a real polynomial and $P|_{\Lambda} \equiv 0$, then $P \equiv 0$.
\item There exist $t > 0$ and $U \in \Usf(m-1)$ such that 
$$
\phi(v,w) = (e^{-2t}v, e^{-t}Uw)
$$
for all $(v,w) \in \Rb \times \Cb^{m-1}$.
\end{enumerate}
\end{lemma}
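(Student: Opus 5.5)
The four items are verified one at a time, directly from the definitions and from earlier results. Items (1) and (2) are immediate transfers through the real analytic diffeomorphism $\zeta$; item (3) reduces to Proposition~\ref{prop:properties Z dense}(4); item (4) is a computation with Equations~\eqref{eqn:at in hyperboloid} and~\eqref{eqn:M in hyperboloid}.

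For (1), $\zeta : \partial\Bb^m\smallsetminus\{-e_1\}\to \Rb\times\Cb^{m-1}$ is a real analytic diffeomorphism. Hence $h = h_0\circ\zeta^{-1}$ is real analytic on $\Oc=\zeta(\Oc_0)$, and it is non-constant because $h_0$ is. We arrange $\Oc_0$ small enough to avoid $-e_1$ and $x$, which is possible since $e_1\neq -e_1$ and $e_1\neq x$.

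For (2), $\zeta(e_1)=\pi(F(e_1))=\pi(\vect{0})=\vect{0}$ and $\zeta$ is a homeomorphism, so $\Oc$ is an open neighborhood of $\vect{0}$. Moreover, $\Lambda(\Gamma)\cap\Oc_0\subset Z\cap \Oc_0=h_0^{-1}(0)$, since $x\notin\Oc_0$ and $\Lambda(\Gamma)\smallsetminus\{x\}\subset Z$. Applying $\zeta$ gives $\Lambda\cap\Oc\subset h^{-1}(0)$. Here $\Lambda$ should be read as $\zeta(\Lambda(\Gamma)\smallsetminus\{-e_1\})$.

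For (3), the key point is that $\zeta^{-1}$ is rational. Writing $(s,w)\in\Rb\times\Cb^{m-1}$, the point of $\partial\Pc^m$ above it is $(s+i\norm{w}^2,w)$. Then $\zeta^{-1}(s,w)=F^{-1}(s+i\norm{w}^2,w)$, whose coordinates are ratios of real polynomials in $(s,w)$ with denominator $s+i\norm{w}^2+i$. This denominator never vanishes, since its imaginary part is $\norm{w}^2+1$.

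Now suppose $P$ is a real polynomial on $\Rb\times\Cb^{m-1}$ vanishing on $\Lambda$. I want a real polynomial $Q$ on $\Cb^m$ with $Q\circ\zeta^{-1}=P\cdot(\text{nonvanishing factor})$ on the relevant set. The plan is to invert $F$ concretely: for $z\in\partial\Bb^m\smallsetminus\{-e_1\}$,
\[
\zeta(z)=\Bigl(\mathrm{Re}\bigl(i\tfrac{1-z_1}{1+z_1}\bigr),\tfrac{iz_2}{1+z_1},\dots,\tfrac{iz_m}{1+z_1}\Bigr).
\]
Every coordinate (real and imaginary parts) is a real polynomial in $z$ divided by $\abs{1+z_1}^2$. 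Hence $P\circ\zeta$ equals $Q(z)/\abs{1+z_1}^{2N}$ for some real polynomial $Q$ on $\Cb^m$ and $N=\deg P$. Then $Q$ vanishes on $\Lambda(\Gamma)\smallsetminus\{-e_1\}$. The product $Q\cdot\abs{1+z_1}^2$ is a real polynomial vanishing on all of $\Lambda(\Gamma)$, so Proposition~\ref{prop:properties Z dense}(4) shows it vanishes on $\partial\Bb^m$. Dividing off the factor, $Q$ vanishes on $\partial\Bb^m\smallsetminus\{-e_1\}$. Therefore $P\circ\zeta\equiv0$ there, and since $\zeta$ is surjective onto $\Rb\times\Cb^{m-1}$, $P\equiv0$. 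The only delicate point in this item is keeping track of the denominator.

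For (4), write $\phi_0=ka_t$ with $k\in\Msf$ and $t>0$. Combining Equations~\eqref{eqn:at in hyperboloid} and~\eqref{eqn:M in hyperboloid}, $F\phi_0F^{-1}(v,w)=(e^{-2t}v,e^{-t}Uw)$; here $F k F^{-1}$ acts only on $w$ and $Fa_tF^{-1}$ acts by scalars, so the two commute. Restricted to $\partial\Pc^m$ and pushed forward by $\pi$, this gives $\phi(s,w)=(e^{-2t}s,e^{-t}Uw)$, because taking the real part commutes with multiplication by the real number $e^{-2t}$. I expect no real obstacle here; the main care overall is item (3).
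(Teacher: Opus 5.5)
Your proposal is correct and follows the paper's route: (1), (2) and (4) are read off from the definitions and Equations~\eqref{eqn:at in hyperboloid}--\eqref{eqn:M in hyperboloid}, and (3) writes $P\circ\zeta$ as a ratio of real polynomials whose denominator is nonvanishing away from $-e_1$, then applies Proposition~\ref{prop:properties Z dense}. The only difference is at the point $-e_1$: the paper extends the vanishing of the numerator there ``by continuity'', which tacitly uses that $-e_1$ is a limit of $\Lambda(\Gamma)\smallsetminus\{-e_1\}$, whereas your extra factor $\abs{1+z_1}^2$ makes that step unnecessary.
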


\begin{proof} Parts (1) and (2) are immediate from the definitions.  Since $\phi = ka_t$ for some $t > 0$ and $k \in \Msf$, part (4) is a consequence of Equations~\eqref{eqn:at in hyperboloid} and \eqref{eqn:M in hyperboloid}.

For Part (3), notice that $\zeta$ is a complex rational function. So we can write $P\circ \zeta = \frac{P_1}{P_2}$ where $P_1,P_2$ are real polynomials and $P_2$ is non-vanishing on $\partial \Bb^m \smallsetminus \{-e_1\}$. Thus $P_1|_{ \Lambda(\Gamma) \smallsetminus \{-e_1\}}\equiv 0$ and so by continuity $P_1|_{ \Lambda(\Gamma)}\equiv 0$. Then Proposition~\ref{prop:properties Z dense} implies that $P_1|_{\partial \Bb^m} \equiv 0$. So $P \equiv 0$.
\end{proof}

For $w \in \Cb^{m-1}$ and $\beta \in \Zb_{\geq 0}^{2m-2}$ let 
$$
w^\beta := {\rm Re}(w_1)^{\beta_1}{\rm Im}(w_1)^{\beta_2}\cdots {\rm Re}(w_{m-1})^{\beta_{2m-3}}{\rm Im}(w_{m-1})^{\beta_{2m-2}}.
$$
Also let 
$$
\abs{\beta} = \beta_1+\cdots + \beta_{2m-2}.
$$
Since $h$ is real analytic, by shrinking $\Oc_0$, and hence also $\Oc$, we can assume that 
$$
h(v,w) = \sum_{\alpha \in \Zb_{\geq 0}}\sum_{\beta \in \Zb^{2m-2}} c_{\alpha, \beta} v^\alpha w^\beta
$$
on $\Oc$, where the coefficients $\{c_{\alpha,\beta}\}$ are real.

Since $h$ is non-constant, at least one coefficient is non-zero. Let 
$$
N : = \min\left\{ 2\alpha + \abs{\beta} : c_{\alpha,\beta} \neq 0\right\} \in \Zb_{\geq 0}.  
$$
Then using Taylor's theorem and shrinking $\Oc_0$, we can write 
$$
h(v,w) = \sum_{\alpha +\abs{\beta} \leq N}c_{\alpha, \beta} v^\alpha w^\beta + E(v,w)
$$
and there exists $C > 0$ such that  $E$ satisfies 
\begin{equation}
    \abs{E(v,w)}\leq C \norm{(v,w)}^{N+1}
    \label{taylor_approx}
\end{equation}
on $\Oc$. 

Let 
$$
P(v,w) := \sum_{2\alpha +\abs{\beta} = N}c_{\alpha, \beta} v^\alpha w^\beta,
$$
which is a non-constant real polynomial. 

Recall that 
$$
\phi(v,w) = (e^{-2t}v, e^{-t}Uw)
$$
where $t > 0$ and $U \in \Usf(m-1)$. Pick $n_j \rightarrow \infty$ such that $U^{n_j} \rightarrow \Id$.

\begin{lemma} $e^{N n_j t} h\phi^{n_j}$ converges pointwise on $\Rb \times \Cb^{m-1}$ to $P$ and $P|_{\Lambda} \equiv 0$.
\end{lemma}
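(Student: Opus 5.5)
The lemma has two parts: pointwise convergence, and then vanishing of $P$ on $\Lambda$. Both will follow from the explicit form of $\phi$ in Lemma~\ref{lem:basic properties of translated objects}(4) and the weighted homogeneity of $P$. Note that the expansion of $h$ should be read with the weighted truncation $2\alpha+\abs{\beta}\le N$. The terms with $2\alpha+\abs{\beta}<N$ vanish by the definition of $N$. For the remainder we use the weighted bound $\abs{E(v,w)}\le C\,(\abs{v}^{1/2}+\norm{w})^{N+1}$, which holds near $\vect{0}$ because every monomial with $2\alpha+\abs{\beta}\ge N+1$ is bounded by this quantity. (The cruder Euclidean bound~\eqref{taylor_approx} is not enough here, since $v$ only contracts like $e^{-2t}$ while the weight counts it twice.)

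\textbf{Step 1 (convergence).} Fix $(v,w)$. Since $\phi^{n}(v,w)=(e^{-2nt}v,e^{-nt}U^{n}w)\to\vect{0}$, the point lies in $\Oc$ for large $n$, so $h\phi^{n}(v,w)$ is defined for large $n$. Write $P(v,w)=\sum_{2\alpha+\abs{\beta}=N}c_{\alpha,\beta}v^\alpha w^\beta$. Each monomial is weighted homogeneous: $(e^{-2s}v)^\alpha(e^{-s}w)^\beta=e^{-Ns}v^\alpha w^\beta$. Hence
\[
e^{Nnt}\,P(\phi^{n}(v,w)) = P(v,U^{n}w),
\]
and along $n=n_j$ this tends to $P(v,w)$, because $U^{n_j}\to\Id$ and $P$ is continuous. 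For the error, $e^{Nnt}\abs{E(\phi^n(v,w))}\le C e^{Nnt}\bigl(e^{-nt}(\abs{v}^{1/2}+\norm{w})\bigr)^{N+1}=O(e^{-nt})\to 0$. This gives $e^{Nn_jt}h\phi^{n_j}\to P$ pointwise.

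\textbf{Step 2 ($P|_\Lambda\equiv0$).} The set $\Lambda(\Gamma)$ is $\Gamma$-invariant and $\phi_0\in\Gamma$, so $\Lambda$ is invariant under $\phi$; here $\phi$ is a polynomial map on all of $\Rb\times\Cb^{m-1}$. So for $p\in\Lambda$ and large $n$ we have $\phi^{n}(p)\in\Lambda\cap\Oc$, which lies in $h^{-1}(0)$ by Lemma~\ref{lem:basic properties of translated objects}(2). Therefore $e^{Nn_jt}h(\phi^{n_j}(p))=0$ for large $j$, and passing to the limit gives $P(p)=0$.

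The main obstacle is the remainder estimate, which must be made in the weighted sense described above. After shrinking $\Oc$ to a small polydisc, the weighted bound follows from the absolutely convergent power series: every monomial with $2\alpha+\abs{\beta}\ge N+1$ factors as a monomial of weight exactly $N+1$ (bounded by $(\abs{v}^{1/2}+\norm{w})^{N+1}$) times a bounded factor. I expect this to be the only delicate point.
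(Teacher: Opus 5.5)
Your proof is correct. Its skeleton matches the paper's: rescale by $e^{Nnt}$, extract $P$, kill the remainder, pass to the limit along $U^{n_j}\to\Id$. Step~2 is identical to the paper's.

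The difference is the truncation, and there your parenthetical remark misreads the paper. The truncation $\alpha+\abs{\beta}\le N$ is by \emph{ordinary} degree on purpose; it is not a typo for $2\alpha+\abs{\beta}\le N$. With it, the remainder consists of monomials of ordinary degree at least $N+1$, so \eqref{taylor_approx} holds. It also suffices: $\norm{\phi^n(v,w)}\le e^{-nt}\norm{(v,w)}$ gives $e^{Nnt}\abs{E(\phi^n(v,w))}\le Ce^{-nt}\norm{(v,w)}^{N+1}$. The price is that the finite sum contains extra monomials with $\alpha+\abs{\beta}\le N<2\alpha+\abs{\beta}$. The paper kills these separately via the factor $e^{-nt(2\alpha+\abs{\beta}-N)}$.

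Your weighted truncation buys the exact identity $e^{Nnt}P(\phi^n(v,w))=P(v,U^nw)$ with no leftover terms. The cost is a non-standard remainder bound: for your remainder the Euclidean bound is genuinely false, e.g.\ for $v^k$ with $2k\ge N+1>k$.

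Your weighted bound is true, but the justification needs fixing. Factoring out ``a monomial of weight exactly $N+1$'' is not always possible, e.g.\ for $v^2$ when $N+1=3$. Instead, put $s=\abs{v}^{1/2}+\norm{w}$ and note that $\abs{v^\alpha w^\beta}\le s^{2\alpha+\abs{\beta}}$. Pick $s_0>0$ with $\sum\abs{c_{\alpha,\beta}}s_0^{2\alpha+\abs{\beta}}<\infty$, which is possible by absolute convergence on a small polydisc. Then for $s\le s_0$ the tail is at most $s^{N+1}s_0^{-(N+1)}\sum\abs{c_{\alpha,\beta}}s_0^{2\alpha+\abs{\beta}}$.
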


\begin{proof} Fix $(v,w) \in \Rb \times \Cb^{m-1}$. Since $\lim_{j \rightarrow \infty} \phi^{n_j}(v,w) = \vect{0}$, for $j$ sufficiently large $\phi^{n_j}(v,w) \in \mathcal{O}$ and hence $h(\phi^{n_j}(v,w))$ is well-defined.

By Equation~\eqref{taylor_approx}, for $j$ sufficiently large
    \begin{align*}
       \abs{e^{N n_jt}E(\phi^{n_j}(v,w))} &\leq Ce^{\lambda n_jt}\norm{(e^{-2n_jt}v,e^{-n_jt}U^{n_j}w)}^{N + 1} \\
        &= Ce^{N n_jt - (N + 1)n_jt}\left(\sqrt{\|e^{-n_jt}v\|^2 + \|U^{n_j}\|^2 \cdot \|w\|^2}\right)^{N + 1} \\
        &\leq C\|(v, w)\|^{N+ 1}e^{-n_jt}.
    \end{align*}
    Hence $\lim_{j \rightarrow \infty} \abs{e^{N n_jt}E(\phi^{n_j}(v,w))}=0$. 
    
    Note that 
    $$
     \abs{(U^{n_j}w)^\beta} \leq \norm{U^{n_j}w}^{|\beta|}_\infty \leq \|U^{n_j}w\|^{|\beta|} = \|w\|^{|\beta|}.
     $$
Hence if $2\alpha +\abs{\beta} > N$, then 
    \begin{align*}
     \lim_{j \rightarrow \infty}   \abs{c_{\alpha, \beta}e^{-n_jt(2\alpha + |\beta| - N)}v^\alpha(U^{n_j}w)^\beta} \leq\lim_{j \rightarrow \infty} \abs{c_{\alpha,\beta}}\abs{v}^\alpha  \|w\|^{|\beta|}  e^{-n_jt} =0. 
    \end{align*}
    
  Recall that   $U^{n_j} \to \Id$. So if $2\alpha +\abs{\beta} = N$, then 
    $$ 
    \lim_{j \to \infty} c_{\alpha, \beta}e^{-n_jt(2\alpha + |\beta| - N)}v^\alpha(U^{n_j}w)^\beta = c_{\alpha,\beta}v^\alpha w^\beta. 
    $$

        Therefore, \begin{align*}
       \lim_{j \rightarrow \infty} e^{N n_jt} h\phi^{n_j}(v,w) &= \lim_{j \rightarrow \infty} \sum_{\alpha +\abs{\beta} \leq N} c_{\alpha, \beta}e^{-n_jt(2\alpha + |\beta| - N)}v^\alpha(U^{n_j}w)^\beta + e^{N n_jt}E(\phi^{n_j}(v,w)) \\
       & = P(v,w).
    \end{align*}

    Now suppose that $(v,w) \in \Lambda$. Since  $\Lambda(\Gamma)$ is $\Gamma$-invariant and $\phi^{n_j}(v,w) \rightarrow \vect{0}$, for $j$ sufficiently large $\phi^{n_j}(v,w) \in \Lambda \cap \Oc$. So 
    $$
    P(v,w) = \lim_{j \rightarrow \infty} e^{N n_jt} h\phi^{n_j}(v,w) =0
    $$
    by Lemma~\ref{lem:basic properties of translated objects}  part (2).  Hence $P|_\Lambda \equiv 0$.
\end{proof} 

Now $P$ is a non-constant real polynomial and $P|_\Lambda \equiv 0$, so we have a contradiction with Lemma~\ref{lem:basic properties of translated objects}  part (3). This concludes the proof of Theorem \ref{thm:analytic envelopes}.

\section{Finishing the proof}\label{sec:final section} 

In this section we complete the proof of Theorem~\ref{thm:main}, which we restate here.

\begin{theorem} If $2 \leq m < M$ and $f: \Bb^m \rightarrow \Bb^M$ is a proper holomorphic map where 
\begin{enumerate} 
\item the image of the projection $\Gsf_f \rightarrow \Aut(\Bb^m)$ is Zariski dense,
\item $f$ extends to an $\alpha$-H\"older continuous map $\overline{\Bb^m} \rightarrow \overline{\Bb^M}$ for some $\alpha > \frac{1}{2}$, 
\end{enumerate} 
then there exist $\phi_1 \in \Aut(\Bb^M)$ and $\phi_2 \in \Aut(\Bb^m)$ such that  
\begin{align*}
\phi_1 \circ f \circ \phi_2(z) = (z,0)
\end{align*}
for all $z \in \Bb^m$. 
\end{theorem}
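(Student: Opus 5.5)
Following the outline in Section~\ref{sec:outline of proof}, I will show that $f$ maps every complex affine line through two boundary points into a complex affine line. Then I will apply the Fundamental Theorem of Affine Geometry to get rationality, and finish with D'Angelo--Xiao.

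\textbf{Step 1: all lines $L_{xy}$ with $x \neq y \in \partial\Bb^m$.} Let $\Gamma$ be the image of $\Gsf_f \rightarrow \Aut(\Bb^m)$. Fix $x \in \partial \Bb^m$. I want $Z_x = \partial\Bb^m \smallsetminus\{x\}$, and I would first treat $x \in \Lambda(\Gamma)$. In that case Corollary~\ref{cor:complex lines intersecting the limit set} gives $\Lambda(\Gamma)\smallsetminus\{x\} \subset Z_x$. Theorem~\ref{thm: Z is a real analaytic variety} says $Z_x$ is a real analytic set, and by definition $Z_x \subset \partial \Bb^m\smallsetminus\{x\}$. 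Theorem~\ref{thm:analytic envelopes} then gives $Z_x = \partial \Bb^m \smallsetminus\{x\}$.

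For arbitrary $x$ I would use symmetry and the same envelope argument. If $y \in \Lambda(\Gamma)$ and $x \ne y$, the previous case gives $x \in Z_y$, and $x\in Z_y$ is equivalent to $y \in Z_x$ since $L_{xy}=L_{yx}$. So again $\Lambda(\Gamma)\smallsetminus\{x\}\subset Z_x$, and Theorem~\ref{thm:analytic envelopes} applies once more. There is one technical point: Theorem~\ref{thm: Z is a real analaytic variety} should give relative closedness of $Z_x$ in $\Cb^m\smallsetminus\{x\}$. I would check this by continuity of $f$: a limit of maps of a line segment into lines maps into a line, after a compactness argument on the space of lines.

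\textbf{Step 2: every complex affine line.} Any complex affine line $L$ meeting $\Bb^m$ meets $\partial\Bb^m$ in a circle, so $L = L_{xy}$ for some distinct $x,y\in\partial\Bb^m$. Hence for every such $L$, $f(L\cap \Bb^m)$ lies in a complex line. The target is $\Cb^M$ rather than $\Cb^m$, so Theorem~\ref{thm:FTAG} does not apply verbatim; this is the step I expect to be the main obstacle. My plan is to take the complex affine span $W$ of $f(\Bb^m)$.

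\emph{Bounding $\dim W$.} Pick $p \in \Bb^m$ and consider the lines through $p$. Their images lie in lines through $f(p)$, so the image of a neighborhood of $p$ is contained in the cone over $f(p)$ of these image lines. I expect an argument using images of hyperplanes would show that $f(\Bb^m)$ lies in an affine subspace of dimension $m$. Such an argument would take images of $2$-planes spanned by lines into $2$-planes, by applying the line property to lines in the plane joining points of two image lines. This would reduce to the case $\dim W = m$.

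\emph{Degenerate case.} If instead $f(\Bb^m)$ lies in a lower-dimensional subspace, then $f$ is not immersive, which I would rule out using properness and equivariance. Alternatively I would accept rank considerations: $f$ proper implies finite fibers, so its image has dimension $m$.

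\emph{Applying FTAG.} Identifying $W\cong\Cb^m$ by an affine isomorphism, Theorem~\ref{thm:FTAG} applies with $\Uc=\Bb^m$. It shows $f$ is fractional linear or anti-fractional-linear, and the antiholomorphic option is excluded since $f$ is holomorphic and nonconstant. Thus $f$ is rational, in fact a linear fractional map into $W$.

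\textbf{Step 3: conclusion.} By Proposition~\ref{prop:properties Z dense}, $\Lambda(\Gamma)$ is infinite, hence $\Gamma$ is not contained in a compact group. Therefore $\Gsf_f$ is non-compact. Since $f$ is a rational proper map, D'Angelo--Xiao~\cite[Corollary 3.2]{DX2017} gives automorphisms $\phi_1,\phi_2$ with $\phi_1\circ f\circ\phi_2(z)=(z,0)$.
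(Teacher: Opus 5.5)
Your Steps 1 and 3 follow the paper's argument. Step 3 also makes explicit the non-compactness of $\Gsf_f$ that D'Angelo--Xiao requires, which the paper leaves implicit.

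\textbf{The gap.} It is in Step 2, at the point you flag yourself: the bound $\dim W\le m$ is only asserted (``I expect an argument\ldots''). The 2-plane sketch is not carried out. Making it rigorous would require an induction on dimension and handling degenerate configurations: points with equal images, coinciding image lines, and auxiliary lines that miss $\Bb^m$. Without $\dim W=m$ you cannot identify $W$ with $\Cb^m$, so Theorem~\ref{thm:FTAG} does not yet apply.

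For the lower bound $\dim W\ge m$, use your rank argument rather than immersivity. Fibers of a proper map are compact analytic subsets of $\Bb^m$, hence finite, so the generic rank of $f$ is $m$. Proper maps between balls can have critical points, e.g.\ $(z_1^2,\sqrt{2}z_1z_2,z_2^2)$ at $\vect{0}$.

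\textbf{How to close it.} The missing bound has a short proof using holomorphy. Choose $p\in\Bb^m$ with $df_p$ injective, which is possible by the generic-rank fact. For $q\in\Bb^m\smallsetminus\{p\}$, the holomorphic curve $t\mapsto f(p+t(q-p))$ lies in a complex affine line and has nonzero velocity $df_p(q-p)$ at $t=0$. So that line must be $f(p)+\Cb\, df_p(q-p)$. Hence $f(\Bb^m)\subset f(p)+df_p(\Cb^m)$ and $\dim W=m$.

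With this patch your route works, and it gives more than rationality. Theorem~\ref{thm:FTAG} makes $f$ an injective fractional linear map into the $m$-dimensional slice $W\cap\Bb^M$. Moving this slice to $\Bb^m\times\{0\}$ by an element of $\Aut(\Bb^M)$ yields a proper injective holomorphic self-map of $\Bb^m$, i.e.\ an automorphism. So you could finish without D'Angelo--Xiao~\cite{DX2017}.

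\textbf{Comparison with the paper.} The paper sidesteps the dimension question. It normalizes $f(\vect{0})=\vect{0}$ and shrinks the target so that $f(\Bb^m)$ spans $\Cb^M$. It then applies Theorem~\ref{thm:FTAG} to $\pi_J\circ f$ for every $m$-element set $J$ of coordinates. The general-position hypothesis holds because otherwise $f(\Bb^m)$ would lie in a proper linear subspace. This gives only that every coordinate of $f$ is rational, which is why the paper then needs D'Angelo--Xiao to conclude.
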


\begin{proof} After possibly replacing $f$ by $gf$ for some $g \in \Aut(\Bb^M)$ and possibly replacing $\Bb^M$ by a lower dimensional ball, we can assume that $f(\vect{0})=\vect{0}$ and $f(\Bb^m)$ spans $\Cb^M$. Further, since $f$ is proper, we must have $M \geq m$. 

As in Section~\ref{sec:analytic sets}, for $x \in \partial \Bb^m$ let 
\[
Z_{x} := \left\{y \in \partial\Bb^m\smallsetminus\{x\} : f\left(L_{xy} \cap \overline{\Bb^m}\right) \text{ is contained in a complex affine line}\right\}.
\]
By Theorem~\ref{thm: Z is a real analaytic variety}, each $Z_x$ is a real analytic set in $\partial \Bb^m\smallsetminus \{x\}$. 

\medskip

\noindent \textbf{Claim:}  $Z_x = \partial \Bb^m\smallsetminus \{x\}$ for all $x \in \partial \Bb^m$. 

\medskip

\noindent \emph{Proof of Claim:} First assume that $x \in \Lambda(\Gamma)$. Then Corollary~\ref{cor:complex lines intersecting the limit set} implies that 
$$
\Lambda(\Gamma) \smallsetminus \{x\} \subset Z_x. 
$$
Then Theorem~\ref{thm:analytic envelopes} implies that $Z_x = \partial \Bb^m\smallsetminus \{x\}$. Next assume that $x \notin \Lambda(\Gamma)$. Since $y \in Z_x$ if and only if $x \in Z_y$, the previous case implies that 
$$
\Lambda(\Gamma) \subset Z_x. 
$$
Then Theorem~\ref{thm:analytic envelopes} implies that $Z_x = \partial \Bb^m\smallsetminus \{x\}$. \hfill $\blacktriangleleft$

\medskip

Now for every complex affine line $L \subset \Cb^m$, the image 
$$
f\left( L \cap \overline{\Bb^m} \right)
$$
is contained in a complex affine line in $\Cb^M$. 

\medskip
\noindent \textbf{Claim:} $f$ is a rational function.

\medskip



\noindent \emph{Proof of Claim:} 
Fix a subset $J=\{ j_1 < \cdots < j_m\}$ of size $m$ in $\{1, \dots, M\}$. Then define $\pi : \mathbb{C}^M \to \mathbb{C}^m$ by $$ 
\pi(z_1, \dots, z_M) = (z_{j_1}, \dots, z_{j_m}). 
$$
Note that the map $\pi$ is linear so it maps complex affine lines to complex affine lines. Together with the previous claim, for any complex affine line $L \subset \Cb^m$, the image
$\pi f(L \cap \Bb^m)$ is contained in a complex affine line.

Furthermore, since $f(\Bb^m)$ spans $\Cb^M$  the image $\pi f$ contains $(m + 1)$-points in general affine position. Otherwise, since $f(\vect{0})=\vect{0}$, the image of $\pi f$ would be contained in a proper linear subspace of $\Cb^m$, which would imply that $f(\Bb^m)$ is contained in a proper linear subspace of $\Cb^M$.   

Hence, by Theorem~\ref{thm:FTAG} and $f$ being holomorphic, there exists
$$g := \begin{pmatrix}
    A & b\\
    c^T & d
\end{pmatrix} \in \GL(m + 1, \Cb)$$
such that
$$\pi f(z) = \frac{Az + b}{c^Tz + d}.$$

Then the coordinate functions $f_{j_1},\dots, f_{j_m}$ are all rational. Since $J \subset \{1,\dots,M\}$ was an arbitrary subset of size $m$, this implies that every coordinate function of $f$ is rational, so $f$ is rational. \hfill $\blacktriangleleft$

\medskip

Finally, since $f$ is rational, \cite[Corollary 3.2]{DX2017} implies that there exist $\phi_1 \in \Aut(\Bb^M)$ and $\phi_2 \in \Aut(\Bb^m)$ such that  
\begin{align*}
\phi_1 \circ f \circ \phi_2(z) = (z,0)
\end{align*}
for all $z \in \Bb^m$. 
\end{proof}

\bibliographystyle{alpha}
\bibliography{SCV} 

\end{document}